\newcommand{\const}{\mathscr{C}}
\newcommand{\cnst}{\mathscr{B}}
\newcommand{\pwr}{\theta}
\newcommand{\pow}{\varsigma}
\newcommand{\lst}{\eta}
\newcommand{\hft}{\varsigma}
\newcommand{\jdiff}{\Xi}
\newcommand{\inpo}{{x_0}}
\newtheorem{theorem}{Theorem}[section]
\newtheorem{lemma}[theorem]{Lemma}
\newtheorem{assumption}[theorem]{Assumption}
\newtheorem{definition}[theorem]{Definition}
\newtheorem{proposition}[theorem]{Proposition}
\newtheorem{remark}[theorem]{Remark} 
\newtheorem{corollary}[theorem]{Corollary}
\newtheorem{example}[theorem]{Example}
\title[Moment stability of stochastic processes]{Moment stability of stochastic processes with applications to control systems}
\author{Arnab Ganguly}
\address{Department of Mathematics, Louisiana State University, USA.}
\email{aganguly@lsu.edu}
\thanks{Research of A. Ganguly is supported in part by NSF DMS - 1855788 and Louisiana Board of Regents through the Board of Regents Support Fund (contract number: LEQSF(2016-19)-RD-A-04)}
\author{Debasish Chatterjee }
\address{Indian Institute of Technology Bombay, Systems \& Control Engineering, India.}
\email{ dchatter@iitb.ac.in}
\subjclass[2010]{60J05, 60J20, 60F17, 93D05, 93E15}
\keywords{moment bound, stability, ergodicity, Markov processes, control systems.
}
\begin{document}

\begin{abstract}
We establish new conditions for obtaining uniform bounds on the moments of discrete-time stochastic processes. Our results require a weak negative drift criterion along with a state-dependent restriction on the sizes of the one-step jumps of the processes. The state-dependent feature of the results make them suitable for a large class of multiplicative-noise processes. Under the additional assumption of Markovian property, new result on ergodicity has also been proved.  There are several applications to iterative systems, control systems, and other dynamical systems with state-dependent multiplicative noise, and we include  illustrative examples to demonstrate  applicability of our results.
\end{abstract}
\date{\today}

\maketitle

\section{Introduction}
The paper studies stability properties of a general class of discrete-time stochastic systems. Assessment of stability of dynamical systems is an important research area which has been studied extensively over the years. For example, in  control theory a primary objective is to design suitable control policies which will ensure appropriate stability properties (e.g. bounded variance) of the underlying controlled system.  There are various notions of stability of a system. In mathematics,  stability often refers to  equilibrium stability, which,  for deterministic dynamical systems, is mainly concerned with qualitative behaviors of the trajectories of the system that start near the equilibrium point. For the stochastic counterpart, in Markovian setting it usually involves study of  existence of  invariant distributions and associated convergence and ergodic properties.  A comprehensive source of results  on different  ergodicty properties for discrete-time Markov chains using Foster-Lyapunov functions is \cite{ref:MeyTwe-09} (also see the references therein). Several extensions of such results have since then been explored quite extensively in the literature (for example, see \cite{MT93-1, MT93-2}).   Another important book in this area is \cite{HL03}, which uses expected occupation measures of the chain for identifying conditions for stability.

The primary objective of the paper is to study moment stability, which concerns itself with uniform bounds on moments of a general stochastic process $X_n$ or, more generally, on expectations of the form  $\EE (V(X_n))$ for a given function $V$. This is a bit different from the usual notions of stability in  the Markovian setting as mentioned in the previous paragraph, but they are not unrelated. Indeed, if the process $\{X_n\}$ has certain underlying Lyapunov structure a strong form of Markovian stability holds which in particular implies moment stability.
The  result, which is based on Foster-Lyapunov criterion, can be described as follows. Given a Markov chain $\{X_n\}_{n\in\Nz}$ taking values in a Polish space $\SC{S}$ with a transition probability kernel $\tk$,  suppose there exists a non-negative measurable function $u:\SC{S}\rt [0,\infty)$, called a Foster-Lyapunov function, such that the process $\{u(X_n)\}_{n\in\Nz}$ possesses has the following negative drift condition:
for some constant $b\geq 0, \ \theta>0$, a set $A \subset \SC{S}$, and a function $V: \SC{S} \rt [0,\infty)$ 
\begin{align}
	\label{FL-drift}
\EE\lf[u(X_{n+1}) - u(X_n) | X_n=x \ri] \equiv	\int_{\SC{S}} \tk(x,dy)u(y) -u(x) \leq -\theta V(x) + b\indic{\{x\in A\}}.
\end{align} 
If the set $A$ is petite, (which, roughly speaking, are the sets that have the property that any set $B$ is `equally accessible' from any point inside the petite set - for definition and more details, see \cite{MT92, ref:MeyTwe-09}),  the process $\{X_n\}$ has a unique invariant distribution $\pi$ and also $\pi(V) = \int_{\SC{S}} \pi(dx) V(x) < \infty$. Moreover, under aperiodicity, it can be concluded that  the chain is Harris ergodic, that is,
$$\|\tk^n(x,\cdot) - \pi\|_V \rt 0, \quad \mbox{ as } n \rt \infty,$$
where $\|\cdot\|_V$ is the $V$-norm (see, the definition at the end of introduction) \cite[Chapter 14]{ref:MeyTwe-09}.  In particular, one has $\EE[V(X_n)] \rt \pi(V)$ as $n\rt \infty$ (which of course implies boundedness of $\EE[V(X_n)]$). Thus for a Markov process $\{X_n\}$, one way to get a uniform bound on $V(X_n)$ is to find a  Foster-Lyapunov function $u$ such that \eqref{FL-drift} holds.

The objective of the first part of the paper is to explore scenarios where a strong negative drift condition like \eqref{FL-drift} does not hold or at least such a Lypaunov function is not easy to find for a specific $V$. We do note that the required conditions in our results are formulated in terms of the target function $V$ itself. One pleasing aspect of this feature is that search for a suitable Lyapunov function $u$ is not required for applying these results.

Our main result, Theorem \ref{t:maintheorem}, deals with the general regime where the state process \(\{X_n\}\) is a general stochastic process and not necessarily Markovian. While past study on stability mostly concerns homogeneous Markov processes, the literature  in the case of more general processes including non-homogeneous Markov processes and processes with long-range dependence  is rather limited. The starting point in Theorem  \ref{t:maintheorem} is a weaker negative drift like condition:
\begin{align}
\EE\lf(V(X_{n+1}) - V(X_n) | \SC{F}_n\ri) \leq - A, \quad X_n \notin \SC{D}
\label{th2:negdrft}
\end{align}
which, if $X_n$ is a homogeneous Markov chain, is of course equivalent to $\tk V(x) - V(x) \leq -A$ for $x$ outside $\SC{D}$. 
As could be seen by comparing \eqref{th2:negdrft} with \eqref{FL-drift} that even in the Markovian setting, the results of \cite[Chapter 14]{ref:MeyTwe-09} will not imply $\sup_n\EE(V(X_n)) < \infty.$ In fact, condition \eqref{th2:negdrft} is not enough to guarantee such an assertion even in a deterministic setting. For example, consider the sequence $\{x_n\}$ on $\N$ defined by
\begin{align*}
x_{n+1} = \begin{cases} 
x_n-1, &\quad \text{ if } x_n >1\\
n+1, &\quad \text{ if } x_n = 1.
\end{cases}
\end{align*}
Clearly, $\sup_{n\geq 1} x_n = \infty,$ even though the negative drift condition is satisfied for $\SC{D}=\{1\}$.   But we showed in Theorem \ref{t:maintheorem} that under a state-dependent restriction on the conditional moments of $V(X_{n+1})$ given $\sigalg_n$ (see Assumption \ref{cond_stability2} for details),  the desired uniform moment bound can be achieved. Note that the above sequence $\{x_{n+1}\}$ fails \ref{t:maintheorem:bnds} of Assumption \ref{cond_stability2} but satisfies the other two conditions.

In the (homogeneous) Markovian framework, Theorem  \ref{t:maintheorem} leads to a new result (c.f. Theorem \ref{MP-stab}) on Harris ergodicity of Markov chains which will be useful in occasions when Foster-Lyapunov drift criterion in form of \eqref{FL-drift} does not hold.  Importantly, Theorem \ref{MP-stab} does not require $\SC{D}$ to be petite or prior checking of aperiodicity of the chain.

%

Theorem \ref{t:maintheorem} is partly influenced by a result of Pemantle and Rosenthal \cite{ref:PemRos-99} which established a uniform bound on $\EE(V^r(X_n))$ under \eqref{th2:negdrft} and the additional assumption of  a {\em constant bound} on conditional $p$-th moment of one-step jumps of the process given $\sigalg_n$, that is, $\EE\lf[|V(X_{n+1}) -V(X_n)|^p|\sigalg_n\ri]$. However, for a large class of stochastic systems  the latter requirement of a  uniform bound on conditional moments of jump sizes cannot be fulfilled. In particular, our work is motivated by some problems on stability about a class of stochastic  systems with multiplicative noise where such conditions on one-step jumps are almost always state-dependent and can never be bounded by a constant.
  Our work  generalized the result of \cite{ref:PemRos-99} in two important directions - it uses a different ``metric" to control the one step jumps and it allows such jumps to be bounded by a suitable {\em state dependent} function. 
    Specifically, instead of $\EE\lf[|V(X_{n+1}) -V(X_n)|^p|\sigalg_n\ri]$ we control the {\em centered} conditional $p$-th moment of $V(X_{n+1})$, that is, $\EE\lf[\lf|V(X_{n+1}) -\EE(V(X_{n+1})|\sigalg_n\ri|^p\Big|\sigalg_n\ri]$, in a state-dependent way. The latter quantity can be viewed as a distance between the actual position at time $n+1$, $V(X_{n+1})$, and the expected position at that time given the past information, $\EE(V(X_{n+1})|\sigalg_n)$, while \cite{ref:PemRos-99}  uses the distance between actual positions at times $n+1$  and  $n$. These extensions require a different approach involving different auxiliary estimates. The advantages of this new `jump metric' and the state dependency feature have been discussed in detail after the the proof of Theorem \ref{t:maintheorem}. Together, they significantly increase applicability of our result to a large class of stochastic systems.
    
     This is demonstrated in Section \ref{sec-app}, where  a broad class of systems with multiplicative noise is studied and new stability results (see Proposition \ref{prop:switch} and Corollary \ref{cor-erg-mark}) are obtained. This, in particular, includes stochastic switching systems and Markov processes of the form $X_{n+1} = H(X_n) + G(X_n)\xi_{n+1}.$ The last part of this section is devoted to the important problem of stabilization of stochastic linear systems with bounded control inputs. The problem of interest here is to find conditions which guarantee $L^2$-boundedness of a stochastic linear system of the form $X_{n+1} = AX_n+Bu_n+\xi_{n+1}$ with bounded control input. This has been studied in a previous work of the second author (see \cite{ref:RamChaMilHokLyg-10} and references therein for more background on the problem), and it has been shown that when $(A,B)$ is stabilizable, there exists a $k$-history dependent control policy which assures bounded variance of such system provided the norm of the control is sufficiently large. This upper bound on the norm of the control is an artificial obstacle on its design, and it has been conjectured in \cite{ref:RamChaMilHokLyg-10} that it is not required although a proof couldn't be provided. Here we show that this conjecture is indeed true (c.f. Proposition \ref{prop:control}), and the artificial restriction on the control norm can be lifted largely owing to the new ``metric" in Theorem \ref{t:maintheorem}. In fact, as Proposition \ref{prop:switch} and Corollary \ref{cor-erg-mark} indicate this stabilization result can be easily extended to cover more general classes of stochastic control systems including the ones with multiplicative noise.

The  article is organized as follows. The mathematical framework and the main results are described in Section \ref{sec-main2}.  Section \ref{sec-app} discusses potential applications of our results for a large class of stochastic systems including switching systems, multiplicative Markov models,  which are especially relevant to control theory. 

\np
{\em Notation and terminology:} For a probability kernel $P$ on $\SC{S}\times \SC{S}$, and a function $f: \SC{S} \rt [0,\infty)$, the function $Pf: \SC{S} \rt [0,\infty)$ will be defined by $Pf(x) = \int_{\SC{S}} f(y)P(x,dy)$. In similar spirit, for a  measure $\mu$ on $\SC{S}$, $\mu(f)$ will be defined by $\mu(f) = \int_{\SC{S}}f(x)\mu(dx).$ For a signed measure, $\mu$, on $\SC{S}$. the corresponding  total variation measure is denoted by  $|\mu| = \mu^++\mu^-$, where $\mu=\mu^+-\mu^-$ as per  the Jordan decomposition. If $\mu  =\nu_1 - \nu_2$, where $\nu_1$ and $\nu_2$ are probability measures, the total variation distance $\|\nu_1 - \nu_2\|_{TV}$ is given by
$$\|\nu_1 - \nu_2\|_{TV} = |\mu|(\SC{S}) = 2\sup_{A \in \SC{B}(\SC{S})}|\nu_1(A) - \nu_2(A)|.$$
More generally, if $g: \SC{S} \rt [0,\infty)$ is a measurable function, the {\em $g$-norm} of $\mu = \nu_1-\nu_2$ is defined by $\|\mu\|_g = \sup\{|\mu(f)|: f \text{ measurable and }0\leq f\leq g\}$ 


  Throughout, we will work on an abstract probability space $(\Omega, \SC{F}, \PP)$.  $\EE$ will denote the expectation operator under $\PP$. In context of the process $\{X_n\}$, $\EE_x$ will denote the conditional expectation given $X_0=x$.

	\section{Mathematical framework and main results}\label{sec-main2}

The  section presents two main results, Theorem \ref{t:maintheorem} on uniform bounds on functions of a general stochastic process and Theorem \ref{MP-stab} on ergodicity properties in the homogeneous Markovian setting.  The mathematical framework pertains to a stochastic process  $\{X_n\}$ taking values in a topological space \(\SC{S}\) and involves negative drift conditions outside a set $\SC{D}$, together with a state-dependent control on the size of one-step jumps of $\{X_n\}$.

\subsection{Uniform bounds for moments of stochastic processes} \label{sec:unifbd}
	\begin{assumption}\label{cond_stability2}
		There exist measurable functions $V:\SC{S}\rt [0,\infty), \ \jumpbnd:\SC{S}\rt [0,\infty)$, and a set $\SC{D}\subset \SC{S} $ such that
		\begin{enumerate}[label={\rm (\ref{cond_stability2}-\alph*)}, widest=b, leftmargin=*, align=left, nolistsep]
		\item \label{t:maintheorem:driftbnd} for all $n\in N$, 
		$$\EE_\inpo[V(X_{n+1}) - V(X_n)\mid \sigalg_n] \lq -A \mbox{\ \ on\ \ } \{X_n \notin \SC{D}\};$$
		\item \label{t:maintheorem:jumpbnd} for all $n\in \N$ and some $p > 2$, $\jdiff_n$,   the centered conditional $p$-th moment of $V(X_{n+1})$ given $\sigalg_n$, satisfy
		$$\jdiff_n \doteq   \EE_\inpo\Big[|V(X_{n+1}) - \EE(V(X_{n+1})|\sigalg_n)|^p \Big| \sigalg_n\Big] \lq \jumpbnd(X_n),$$
		where $\jumpbnd(x) \leq \const_{\jumpbnd}(1+V^s(x))$ for some $0\leq s<p/2-1$ and some constant $\const_{\jumpbnd}>0$.
		\item \label{t:maintheorem:bnds} $\sup_{x\in  \SC{D}}V(x)<\infty,$ and for some constant $\bar\cnst_0(\inpo)$,  
		$$\EE_\inpo\lf[\lf(\EE[V(X_{n+1})|\sigalg_n]\ri)^p\indic{\{X_n \in  \SC{D}\}} \ri] < \bar\cnst_0(\inpo).$$
		\end{enumerate}
		\end{assumption}

\begin{theorem}
		\label{t:maintheorem}
		Suppose that Assumption \ref{cond_stability2} holds for the process $\{X_n\}$ with $X_0=\inpo$.
           Then  
		    $$ \cnst_r(\inpo) \doteq \sup_{n\in\N} \EE_\inpo\bigl[V(X_n)^r\bigr] < \infty,$$
		    for any $0\leq r<  \pow(s,p),$
		    where 
		    $$\pow(s,p) = \begin{cases} p\lf(1 - \f{s}{p-2}\ri)-1,& \quad \text{ for } s \in [0, (p-2)^2/2p) \cup [1-2/p,\ p/2-1), \text{ when } \ 												2<p<4; \\
		    & \quad \text{ for all } s \in [0,p/2-1), \text{ when }  \ p \geq 4;\\
		    p-2, &\quad   \text{ for }\ (p-2)^2/2p \leq s <  1-2/p, \text{ when } \ 2<p<4. \\
		    \end{cases}
		     $$
		    
\end{theorem}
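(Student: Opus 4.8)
The plan is to run a Lyapunov-type argument on a carefully chosen power of $V$, but since $V$ itself need not contract, I would instead track the stopped/renewed process and combine the negative drift with a variance-type estimate coming from \ref{t:maintheorem:jumpbnd}. Concretely, fix $r < \pow(s,p)$ and set $W_n = V(X_n)$. The first step is a one-step estimate: conditionally on $\sigalg_n$, write $W_{n+1} = m_n + Z_{n+1}$ where $m_n = \EE_\inpo[W_{n+1}\mid\sigalg_n]$ and $Z_{n+1}$ has vanishing conditional mean and conditional $p$-th moment bounded by $\jumpbnd(X_n) \le \const_{\jumpbnd}(1+W_n^s)$. On $\{X_n\notin\SC{D}\}$ we have $m_n \le W_n - A$. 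The aim is to show $\EE_\inpo[W_{n+1}^r\mid\sigalg_n] \le W_n^r - (\text{something positive})$ whenever $W_n$ is large, i.e. outside an enlarged sublevel set $\{V \le L\}$. Expanding $(m_n+Z_{n+1})^r$: for $r\le 1$ use concavity/subadditivity to bound $\EE[W_{n+1}^r\mid\sigalg_n]\le m_n^r + \EE[|Z_{n+1}|^r\mid\sigalg_n]$ and then $m_n^r \le (W_n-A)^r \le W_n^r - rAW_n^{r-1}$, while by Jensen $\EE[|Z_{n+1}|^r\mid\sigalg_n] \le (\const_{\jumpbnd}(1+W_n^s))^{r/p}$; for $r>1$ one needs a higher-order Taylor expansion of $t\mapsto t^r$ around $m_n$, controlling the remainder by $\EE[|Z_{n+1}|^k\mid\sigalg_n]$ for the relevant integer/non-integer powers $k\le r$, each dominated via the conditional $L^p$ bound and interpolation by a power of $(1+W_n^s)$. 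The negative term is of order $W_n^{r-1}$ and the positive perturbation is of order $W_n^{\max(s r/p,\ sk/p + \dots)}$; the exponent arithmetic — requiring the perturbation exponent to be strictly below $r-1$ — is exactly what produces the constraint $r < \pow(s,p)$, with the two cases in the formula for $2<p<4$ coming from whether the binding term is the $L^p$-interpolated jump term or a lower-order cross term.

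Given the drift inequality $\EE_\inpo[W_{n+1}^r\mid\sigalg_n] \le W_n^r - c\,\indic{\{W_n > L\}} + b\,\indic{\{W_n\le L\}}$ with $c,b,L$ finite (here $b$ absorbs the contribution on $\{X_n\in\SC{D}\}$, which is finite by \ref{t:maintheorem:bnds} together with $\sup_{\SC{D}}V<\infty$, and also the bounded contribution on $\{X_n\notin\SC{D}, W_n\le L\}$), I would then run the standard supermartingale/Foster–Lyapunov bookkeeping. Let $\tau$ be the first return time to $\{W\le L\}$; the drift inequality makes $W_{n\wedge\tau}^r$ a supermartingale after the first step, giving $\EE_\inpo[W_{n\wedge\tau}^r] \le \EE_\inpo[W_0^r]$, hence control of $\EE_\inpo[W_n^r\,\indic{\{n<\tau\}}]$. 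At a return time, $W_\tau \le L$ by definition of the set but the jump from outside can overshoot; this is handled by \ref{t:maintheorem:jumpbnd} applied once more at time $\tau-1$, giving $\EE_\inpo[W_\tau^r \mid \sigalg_{\tau-1}] \lesssim L^r + (1+L^s)^{r/p} + (\text{term from }m_{\tau-1})$, all finite — this is the role of \ref{t:maintheorem:bnds}, which bounds the $p$-th conditional moment at the moment of re-entry. Splitting $\EE_\inpo[W_n^r]$ according to the last visit to $\{W\le L\}$ before time $n$ and summing the geometric-type decay from the supermartingale estimate over excursions then yields a bound uniform in $n$.

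The main obstacle I anticipate is obtaining the clean one-step drift inequality for $W^r$ with the correct exponents when $r>1$: one must Taylor-expand $t\mapsto t^r$ to enough orders, keep the sign of the first-order term (which is where negativity of the drift on $\{X_n\notin\SC{D}\}$ enters — note $m_n$ could be slightly below $0$ only if $V$ could be negative, which it cannot, so $m_n\in[0,W_n-A]$ and monotonicity of $t^r$ is safe), and bound every remainder term by $\EE[|Z_{n+1}|^j\mid\sigalg_n]\le \jumpbnd(X_n)^{j/p}\le \const_{\jumpbnd}^{j/p}(1+W_n^s)^{j/p}$ for $2\le j\le p$, using conditional Jensen/Lyapunov for non-integer $j$. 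Verifying that $\max_j\{(\text{growth exponent of }j\text{-th remainder})\} < r-1$ is equivalent to $r<\pow(s,p)$, and the piecewise definition of $\pow$ reflects which remainder term dominates in which parameter regime — in particular the flat value $p-2$ for $(p-2)^2/2p\le s<1-2/p$ when $2<p<4$ arises because there the binding constraint is simply that the second-order ($j=2$) term, of exponent $(r-2) + 2s/p$ roughly after the expansion, must stay below $r-1$, i.e. $s<p/2 \cdot(\text{stuff})$, capping $r$ at $p-2$ independently of the finer first-order balance. Everything else — the supermartingale stopping argument, the excursion decomposition, finiteness at re-entry — is routine once the drift inequality is in hand; the delicate part is purely the exponent calculus in that first lemma, and I would isolate it as a standalone computational lemma before assembling the proof.
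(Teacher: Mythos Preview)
Your approach is genuinely different from the paper's, and it has two real gaps.

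\textbf{The exponent arithmetic does not produce $\pow(s,p)$.} Carry out your Taylor expansion honestly. With $m_n\le W_n-A$ and $\EE[|Z_{n+1}|^j\mid\sigalg_n]\le \jumpbnd(X_n)^{j/p}\lesssim W_n^{sj/p}$, the $j$-th cross term has exponent $r-j+sj/p=r-j(1-s/p)$, and the remainder term $\EE[|Z_{n+1}|^r\mid\sigalg_n]$ has exponent $sr/p$. Requiring each of these to lie below $r-1$ gives $j>p/(p-s)$ (always true for $j\ge2$ since $s<p/2-1$) and $r>p/(p-s)$ --- a \emph{lower} bound on $r$, not an upper one. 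In particular your claim that the $j=2$ term ``caps $r$ at $p-2$'' is wrong: the condition $(r-2)+2s/p<r-1$ is just $s<p/2$, with no $r$ in it. The specific shape of $\pow(s,p)=p(1-s/(p-2))-1$, with the $p-2$ in the denominator, does not come from any one-step balance; in the paper it arises from a two-stage argument in which Burkholder's inequality on a martingale built from the \emph{linear} increments $V(X_{j+1})-\EE[V(X_{j+1})\mid\sigalg_j]$ contributes a factor $(N-k)^{p/2-1}$, and a subsequent H\"older bootstrap with exponent $\theta\uparrow(p-2)/2s$ converts this into the stated threshold.

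\textbf{The passage from the one-step drift to the uniform bound is not ``standard bookkeeping''.} Even granting $\EE[W_{n+1}^r\mid\sigalg_n]\le W_n^r-c$ on $\{W_n>L\}$, this is only an additive drift, not a geometric contraction; the stopped process $W_{n\wedge\tau}^r$ is a supermartingale but there is no ``geometric-type decay'' over an excursion. Your last-visit decomposition then produces a sum over possible last-visit times $k\le N$ of overshoot moments $\EE[W_{k+1}^r\,\indic{\{W_k\le L\}}]$, with no summable decay in $N-k$ to make the sum uniformly bounded. Relatedly, on $\{X_n\in\SC{D}\}$ assumption \ref{t:maintheorem:bnds} only bounds $\EE_\inpo[m_n^p\,\indic{\{X_n\in\SC{D}\}}]$, not $\EE[W_{n+1}^r\mid\sigalg_n]$ pointwise, so the constant $b$ in your drift inequality is not available. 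The paper avoids all of this: on $\{\lst=k\}$ it uses the linear relation $V(X_N)\le |M_N-M_k|+\xi_k-A(N-k-1)$ to reduce $\EE[V(X_N)^r\indic{\{\lst=k\}}]$ to a tail estimate for $|M_N-M_k|$, controlled via Burkholder by $(N-k)^{p/2-1}\sum_m\EE[\jumpbnd(X_m)]$; the resulting self-referential inequality (since $\jumpbnd\lesssim 1+V^s$) is then closed by an $\varepsilon$-iteration for $r<p/2-1$, and finally extended to $r<\pow(s,p)$ by a separate proposition that inputs the bound $\sup_n\|\jdiff_n\|_\theta<\infty$ just obtained. Neither the Burkholder step nor the iteration has any analogue in your outline.
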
	

\begin{remark}
{\rm
\hs{.2cm} 
\begin{itemize}
\item  The proof is a combination of Proposition \ref{t:maintheorem2} and Proposition \ref{t:maintheorem3}.  Proposition \ref{t:maintheorem2} first establishes a weaker version of the above assertion by showing that $\sup_{n\in\N} \EE_\inpo\bigl[V(X_n)^r\bigr]  < \infty$, for all $r<p/2-1$. However, extension of the result from there to all $r <\pow(s,p)$ (notice that $\pow(s,p) \geq p/2-1$) requires a substantial amount of extra work and is achieved through Proposition \ref{t:maintheorem3}.

\item Note that \ref{t:maintheorem:bnds} is implied by the simpler condition: $\EE_\inpo[V(X_{n+1})|\sigalg_n] \leq \bar\cnst_0$ on $\{X_n \in \SC{D}\}$ for some constant $\bar\cnst_0$.
\end{itemize}
}
\end{remark}

\begin{proof}[Proof of Theorem \ref{t:maintheorem}] 
From Proposition \ref{t:maintheorem2} and the growth assumption on $\jumpbnd$, it follows that for any $1\leq \theta < (p-2)/2s$, $\sup_n \|\jdiff_n\|_\theta \leq \sup_n\lf(\EE_\inpo(\jumpbnd^\theta(X_n))\ri)^{1/\pwr} < \infty$,  where $\|\cdot\|_\theta$ is the $\SC{L}^\theta(\Omega, \PP)$-norm (c.f. Proposition \ref{t:maintheorem3}).    The result now follows from Proposition \ref{t:maintheorem3} by letting $\theta \rt (p-2)/2s-.$ If $s=0$, that is, $\jdiff_n \leq \const$, for some constant $\const$ a.s., we take $\theta=\infty$ in Proposition \ref{t:maintheorem3}.
%
%
%
\end{proof}

At this stage it is instructive to compare Theorem \ref{t:maintheorem} with \cite[Theorem 1]{ref:PemRos-99} and precisely  note some of  the improvements the former offer. The first significant extension is that Theorem \ref{t:maintheorem} allows the jump sizes in \ref{t:maintheorem:jumpbnd} to be state dependent
whereas,  \cite{ref:PemRos-99} requires
\begin{align}
\label{eq:pero}
\EE_\inpo\lf[|V(X_{n+1}) - V(X_n)|^p | \SC{F}_n\ri] \leq B, \tag{\dag}
\end{align}
for some constant $B>0$. The resulting benefits are obvious as it allows the result in particular to be applicable to large class of multiplicative systems of the form
$$X_{n+1} = H(X_n) + G(X_n)\xi_{n+1},$$
which \cite[Theorem 1]{ref:PemRos-99} will not cover.
The second notable distinction is in the `metric' used in \ref{t:maintheorem:jumpbnd} in controlling jump sizes : while \cite{ref:PemRos-99} involves $\EE\lf[|V(X_{n+1}) - V(X_n)|^p | \SC{F}_n\ri]$, our result only requires controlling the centered conditional $p$-th moments of $V(X_{n+1})$ given $\sigalg_n$, namely, $\EE_x\lf[\big|V(X_{n+1}) - \EE[V(X_{n+1})|\sigalg_n]\big|^p\Big|\sigalg_n\ri]$.  Of course, the latter leads to weaker hypothesis as
$$\EE_\inpo\lf[\big|V(X_{n+1}) - \EE[V(X_{n+1})|\sigalg_n]\big|^p\Big|\sigalg_n\ri] \leq 2^{p}\EE_\inpo\lf[|V(X_{n+1}) - V(X_n)|^p | \SC{F}_n\ri]. $$
It is important to emphasize the advantages of the weaker hypothesis as   the condition in \eqref{eq:pero}
precludes it from being applicable even to some additive models. To illustrate this with a simple example, consider a $[0,\infty)$-valued process $\{X_n\}$ given by
$$X_{n+1} = X_n/2 + \xi_{n+1}, \quad X_0 \geq 0,$$
where $\xi_n$ are  $[0,\infty)$-valued random variables with $\mu_p =\sup_n\EE(\xi_n^p) < \infty$ for $p>2$.  Since 
$X_{n+1} - X_n = -X_n/2+\xi_{n+1}, $ clearly the negative drift condition (c.f \ref{t:maintheorem:driftbnd}) holds with $V(x) = |x|$. 
but for the jump sizes  we can only have
\begin{align*}
\EE_\inpo\lf[|X_{n+1} - X_n|^p | \sigalg_n\ri] = O(X_n^p).
\end{align*}
This means that \cite[Theorem 1]{ref:PemRos-99} cannot  be used  to get $\sup_n \EE_x(X_n) < \infty$ for this  simple additive system -  a fact which easily follows from an elementary iteration argument (note, $\EE_x(X_n) \stackrel{n\rt\infty}\rt 2\mu_1$).  On the other hand, our theorem clearly covers such cases as  
\begin{align*}
\EE_\inpo\lf[|X_{n+1} - E\lf(X_{n+1}  | \sigalg_n\ri)|^p\Big|\sigalg_n\ri] \leq \bar \mu_p, \quad \bar \mu_p = \sup_n\EE|\xi_n - \EE(\xi_n)|^p.
\end{align*}
It should actually be noted that had Theorem \ref{t:maintheorem} simply controlled the jump sizes by imposing the more restrictive condition,
 $\EE\lf[|X_{n+1} - X_n|^p | \sigalg_n\ri] \leq \jumpbnd(X_n)$, the state-dependency feature was not enough to salvage the moment bound of the above additive system (because of the requirement $\jumpbnd(x) = O(V^s(x))$ for $s<p/2-1$). It is interesting to note that  the  results of \cite{ref:MeyTwe-09} based on Foster-Lyapunov drift conditions also cannot directly be used in this simple example, as  $\{X_n\}$ is not necessarily Markov (since the $\xi_n$ are not assumed to be i.i.d). To summarize, the weaker jump metric coupled with state dependency feature makes Theorem \ref{t:maintheorem} a rather powerful tool in understanding  stability for a broad class of stochastic systems. Some important results in this direction for switching systems have been discussed in the application section.
%
%

The following lemma will be used in various necessary estimates.

		\begin{lemma}
		\label{l:aux1}
			Let \({M_n}\) be a martingale  relative to the filtration \(\{\sigalg_n\}\), 
			\begin{align}\label{jumpcondM}
			\g_n \stackrel{def}=	\EE\bigl[\abs{M_{n+1} - M_n}^p \,\big|\, \sigalg_n\bigr], \quad n\geq 0
			\end{align}
			 $\Theta$ a  non-negative random variable, and $b>0$ a constant. 
			Then for some constants $\const_0$ and $\const_{00}$
			\begin{enumerate}[label={\rm (\alph*)}, widest=b, leftmargin=*, align=left, nolistsep]
			\item $\dst \EE\lf[|M_n-M_k|^p|\SC{F}_k\ri]  \leq \const_0(n-k)^{\f{p}{2}-1} \sum_{m=k}^{n-1} \EE[\g_m|\SC{F}_k].$
			\item $\dst \EE\lf[(|M_n-M_k|+\Theta)^r \indic{\{|M_n-M_k|+\Theta) >b \}}|\SC{F}_k\ri]  \lq \const_{00}\lf((n-k)^{\f{p}{2}-1} \sum_{m=k}^{n-1} \EE\lf[\g_m|\sigalg_k\ri] + \EE\lf[|\Theta|^p|\sigalg_k\ri]\ri)b^{r-p}.$

			\end{enumerate}
			\end{lemma}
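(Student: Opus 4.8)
The plan is to prove part (a) first, as a conditional square-function estimate for the martingale increments, and then to deduce part (b) from it by a soft truncation argument that converts the truncated $r$-th moment into a full $p$-th moment.

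For part (a), I would fix $k$, set $D_m \doteq M_{m+1}-M_m$, and note that $N_j \doteq M_{k+j}-M_k$ is a martingale for the filtration $(\SC{F}_{k+j})_{j\ge 0}$, null at $j=0$, with discrete quadratic variation $\sum_{m=k}^{k+j-1}D_m^2$. Applying the Burkholder--Davis--Gundy inequality conditionally on $\SC{F}_k$ (the constant $\const_0$ depending only on $p$) gives $\EE[\,|M_n-M_k|^p\mid \SC{F}_k\,]\le \const_0\,\EE\bigl[\bigl(\sum_{m=k}^{n-1}D_m^2\bigr)^{p/2}\bigm|\SC{F}_k\bigr]$. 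Since $p/2\ge 1$, the power-mean (Jensen) inequality over the $n-k$ summands yields, pointwise, $\bigl(\sum_{m=k}^{n-1}D_m^2\bigr)^{p/2}\le (n-k)^{p/2-1}\sum_{m=k}^{n-1}|D_m|^p$; taking conditional expectations and using the tower property $\EE[\,|D_m|^p\mid\SC{F}_k\,]=\EE[\,\EE[\,|D_m|^p\mid\SC{F}_m\,]\mid\SC{F}_k\,]=\EE[\,\g_m\mid\SC{F}_k\,]$ then produces exactly the stated bound.

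For part (b), I would write $Z\doteq |M_n-M_k|+\Theta\ge 0$ and use that on the event $\{Z>b\}$ one has $(Z/b)^{p-r}\ge 1$ (here $r\le p$), so that $Z^r\indic{\{Z>b\}}\le Z^r(Z/b)^{p-r}=b^{r-p}Z^p$ everywhere; hence $\EE[\,Z^r\indic{\{Z>b\}}\mid\SC{F}_k\,]\le b^{r-p}\,\EE[\,Z^p\mid\SC{F}_k\,]$. Then $\EE[\,Z^p\mid\SC{F}_k\,]\le 2^{p-1}\bigl(\EE[\,|M_n-M_k|^p\mid\SC{F}_k\,]+\EE[\,|\Theta|^p\mid\SC{F}_k\,]\bigr)$ by the elementary bound $(a+c)^p\le 2^{p-1}(a^p+c^p)$, and bounding the first term by part (a) yields the claim with $\const_{00}=2^{p-1}\max\{\const_0,1\}$.

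I do not expect a serious obstacle: the only non-elementary input is the conditional Burkholder--Davis--Gundy inequality in part (a), everything else being convexity, the tower property, and a truncation bound. If one prefers to avoid BDG, part (a) can instead be obtained from the conditional Rosenthal inequality for martingale differences, absorbing its predictable-quadratic-variation term into $\sum_{m=k}^{n-1}\EE[\g_m\mid\SC{F}_k]$ by the same power-mean step together with Jensen's inequality applied to the one-step conditional second moments. The role of the lemma is simply to record these two estimates in the precise form in which they will be used repeatedly in the proofs of Propositions \ref{t:maintheorem2} and \ref{t:maintheorem3}.
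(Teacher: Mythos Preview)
Your proposal is correct and follows essentially the same route as the paper: part (a) is obtained from the conditional Burkholder inequality followed by H\"older/power-mean on the square function and the tower property, exactly as in the paper. For part (b) you reach the same bound $\EE[Z^r\indic{\{Z>b\}}\mid\SC{F}_k]\le b^{r-p}\EE[Z^p\mid\SC{F}_k]$ by the direct pointwise inequality $Z^r\indic{\{Z>b\}}\le b^{r-p}Z^p$, whereas the paper derives it via H\"older's inequality combined with Markov's inequality; both then split $Z^p$ using $(a+c)^p\le 2^{p-1}(a^p+c^p)$ and invoke part (a), so the two arguments are equivalent.
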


		\begin{proof}
			Note that by Burkholder's inequality (e.g., see \cite{Pr05}), there exists \(c_p > 0\) such that
			\[
				\EE\bigl[\abs{M_n - M_k}^p\,\big|\, \sigalg_k\bigr] \lq c_p \EE\le[\biggl(\sum_{m = k}^{n-1} \bigl[ \abs{M_{m+1} - M_{m}}^2 \bigr]\biggr)^{p/2}\,\bigg|\,\sigalg_k\ri].
			\]
			Now by H\"older's inequality  and by \eqref{jumpcondM}
			\begin{align}
			\non
				\EE\bigl[\abs{M_n - M_k}^p\,\big|\, \sigalg_k\bigr]	&  \lq c_p(n-k)^{\f{p}{2}-1} \sum_{m=k}^{n-1} \EE\lf[\abs{M_{m+1} - M_{m}}^p|\SC{F}_k\ri]\\
				\label{burk-ineq}
				& \leq  c_p(n-k)^{\f{p}{2}-1} \sum_{m=k}^{n-1} \EE\lf[\g_m|\SC{F}_k\ri].
			\end{align}

%
	Now observe that for a random variable $Y_n$, by H\"older's inequality and Markov's inequality: $\PP(|Y_n| >b | \SC{F}_k) \leq \EE\lf[|Y_n|^p |\SC{F}_k\ri]/b^p$, we have for $r<p$	and $n\geq k$	
			\begin{align*}
			\EE\lf[|Y_n|^r \indic{\{|Y_n| >b \}}|\SC{F}_k\ri] \leq \ \EE\lf[|Y_n|^p |\SC{F}_k\ri]^{r/p} \PP(|Y_n| >b | \SC{F}_k)^{(p-r)/p}
			         \leq \ \EE\lf[|Y_n|^p |\SC{F}_k\ri]/b^{p-r}.
		 \end{align*}
		Taking $Y_n = |M_n - M_k| + \Theta$ we have
		\begin{align*}
		\EE\lf[|M_n - M_k| + \Theta|^r \indic{\{||M_n - M_k| + \Theta_k| >b \}}|\sigalg_k\ri] \leq \
		     2^{p-1} \lf(\EE\lf[|M_n - M_k|^p|\sigalg_k\ri] + \EE\lf[\Theta^p|\sigalg_k\ri]\ri)/b^{p-r},
		\end{align*}
		and part (b) follows from \eqref{burk-ineq}.
		\end{proof}
	
	We now prove the two propositions which form the backbone of our main result, Theorem \ref{t:maintheorem}.

	\begin{proposition}
	\label{t:maintheorem2}
	Suppose that  Assumption \ref{cond_stability2} holds. Then  for any $0\leq r<  p/2-1,$
	$$ \cnst_r(\inpo) \doteq \sup_{n\in\N} \EE_\inpo\bigl[V(X_n)^r\bigr] < \infty,$$
	
\end{proposition}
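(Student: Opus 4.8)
The plan is to establish the weaker moment bound $\sup_n \EE_{\inpo}[V(X_n)^r] < \infty$ for all $r < p/2 - 1$ via a Lyapunov-type iteration on the function $W_n \doteq V(X_n)^r$, exploiting the negative drift \ref{t:maintheorem:driftbnd}, the decomposition of $V(X_{n+1})$ into its conditional mean plus a martingale-difference fluctuation, and the state-dependent jump bound \ref{t:maintheorem:jumpbnd}. The key structural step is to write, for $X_n \notin \SC{D}$,
$$V(X_{n+1}) = \EE[V(X_{n+1})\mid\sigalg_n] + \Delta_{n+1}, \qquad \EE[\Delta_{n+1}\mid\sigalg_n] = 0,\ \EE[|\Delta_{n+1}|^p\mid\sigalg_n] = \jdiff_n \leq \jumpbnd(X_n),$$
and $\EE[V(X_{n+1})\mid\sigalg_n] \leq V(X_n) - A$. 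Raising to the power $r$ and using a Taylor-type / convexity estimate of the form $(a + \delta)^r \leq a^r + r a^{r-1}\delta + C(|a|^{r-2}\delta^2 + |\delta|^r)$ (valid for $r$ in the relevant range, with appropriate interpretation when $r<2$), one gets after taking conditional expectations a drift inequality
$$\EE[V(X_{n+1})^r\mid\sigalg_n] \leq (V(X_n)-A)^r + C\big((V(X_n)-A)^{r-2}\EE[\Delta_{n+1}^2\mid\sigalg_n] + \EE[|\Delta_{n+1}|^p\mid\sigalg_n]\big)$$
on $\{X_n\notin\SC{D}\}$. Since $(V(X_n)-A)^r \leq V(X_n)^r - cA V(X_n)^{r-1}$ for $V(X_n)$ large, and since $\jumpbnd(x) \leq \const_{\jumpbnd}(1+V^s(x))$ with $s < p/2-1$, the first correction term is $O(V(X_n)^{r-2}(1+V(X_n)^{2s/p}\cdot(\text{interpolated}))$ — here one uses Jensen/interpolation, $\EE[\Delta^2\mid\sigalg_n]\leq \jdiff_n^{2/p} \leq \const(1+V(X_n)^{s})^{2/p}$ — and the dominant term $V(X_n)^{r-1}$ from the drift beats all correction terms precisely when $r-1 > \max(r-2+2s/p,\ s)$ and when the bookkeeping with $r < p/2-1$ closes. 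On $\{X_n\in\SC{D}\}$ the increment is controlled directly by \ref{t:maintheorem:bnds} together with $\sup_{\SC{D}}V < \infty$: $\EE[V(X_{n+1})^r\indic{\{X_n\in\SC{D}\}}] \leq \EE[(\EE[V(X_{n+1})\mid\sigalg_n])^r\indic{\{X_n\in\SC{D}\}}] + (\text{martingale fluctuation on }\SC{D})$, and the fluctuation piece uses $\jumpbnd(X_n)\leq\const_{\jumpbnd}(1+(\sup_{\SC{D}}V)^s)$.

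From the combined one-step inequality one obtains, for a suitable constant $b$ and a set $\SC{D}'\supseteq\SC{D}$ (all of $\{V\leq\text{const}\}$), a geometric-drift inequality $\EE[W_{n+1}\mid\sigalg_n] \leq W_n - \varepsilon W_n^{(r-1)/r}\indic{W_n > \text{const}} + b$, or better, after choosing $r$ slightly smaller, the cleaner additive-drift form $\EE[W_{n+1}\mid\sigalg_n]\leq W_n - \varepsilon\indic{\{X_n\notin\SC{D}'\}} + b\indic{\{X_n\in\SC{D}'\}}$ together with a uniform-in-$n$ bound on $\EE[W_{n+1}\mid\sigalg_n]$ when $X_n\in\SC{D}'$. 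Summing over $n$, telescoping, and using nonnegativity of $W_n$ yields $\varepsilon\sum_{m=0}^{n-1}\PP_{\inpo}(X_m\notin\SC{D}')\leq W_0 + nb\cdot(\text{something})$, which alone is not enough; instead one iterates the drift as in the classical Foster–Lyapunov/Meyn–Tweedie argument (or the Pemantle–Rosenthal scheme): from $\EE_{\inpo}[W_{n+1}] \leq \EE_{\inpo}[W_n] - \varepsilon\PP_{\inpo}(X_n\notin\SC{D}') + b\,\PP_{\inpo}(X_n\in\SC{D}')$ one does \emph{not} directly get a bound, so the actual mechanism must be a genuine supermartingale-up-to-a-bounded-region argument: on excursions away from $\SC{D}'$, $W_n$ is a nonnegative supermartingale, so it cannot have grown between successive visits to $\SC{D}'$, and at each visit it re-enters at a level controlled by \ref{t:maintheorem:bnds}; a stopping-time decomposition (stop at the return time $\tau$ to $\SC{D}'$) combined with Lemma \ref{l:aux1}(b) applied to the fluctuation martingale over the excursion gives $\sup_n\EE_{\inpo}[W_n] < \infty$.

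The main obstacle I expect is the interplay between the exponent $r$, the jump exponent $p$, and the growth exponent $s$: making the correction terms $V^{r-2}\cdot\jumpbnd^{2/p}$ and $\jumpbnd$ genuinely lower-order than the drift gain $V^{r-1}$ requires the constraint $s < p/2-1$ in an essential way, and the Taylor remainder estimate $(a+\delta)^r \leq a^r + ra^{r-1}\delta + C(a^{r-2}\delta^2 + |\delta|^r)$ is delicate when $1 < r < 2$ (the middle curvature term and the need for $\delta$ to not be too large relative to $a$, i.e. splitting on $\{|\delta| \leq \tfrac12 a\}$ versus its complement and controlling the complement via Lemma \ref{l:aux1}(b) with $b\asymp V(X_n)$). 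A second, more technical obstacle is handling the non-Markovian excursions: unlike the Markov case one cannot invoke a regeneration structure, so the supermartingale argument over excursions must be made purely in terms of $\sigalg_n$-conditional bounds, and keeping track that the re-entry bound from \ref{t:maintheorem:bnds} holds at the (random) return time rather than a deterministic time requires care — this is presumably where optional stopping applied to the nonnegative supermartingale $(W_{n\wedge\tau})$, plus a union bound over which visit we are on, does the job, with Lemma \ref{l:aux1} supplying the uniform control of the martingale fluctuation term inside each excursion.
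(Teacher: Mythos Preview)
Your one-step Taylor/Lyapunov approach has a genuine gap that you yourself partially identify but do not close. The drift inequality you derive for $W_n = V(X_n)^r$ is only \emph{subgeometric}: on $\{X_n\notin\SC{D}'\}$ you get at best $\EE[W_{n+1}\mid\sigalg_n] \leq W_n - \epsilon V(X_n)^{r-1}$, while on $\{X_n\in\SC{D}'\}$ you get $\EE[W_{n+1}\indic{\{X_n\in\SC{D}'\}}]\leq C$. Combining gives $\EE[W_{n+1}]\leq \EE[W_n]+C$, i.e.\ linear growth, not a uniform bound. The supermartingale-over-excursions patch does not work either: the last-exit time $\eta=\max\{k\le N: X_k\in\SC{D}'\}$ is \emph{not} $\sigalg_k$-measurable, so optional stopping on $(W_{n\wedge\tau})$ gives you no handle on $\EE[W_N\indic{\{\eta=k\}}]$, and the supermartingale property alone yields nothing summable in $k$. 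There is no regeneration structure to save this in the non-Markovian setting, and trying to apply a Pemantle--Rosenthal-type argument to $W_n$ itself would be circular.

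The paper's proof is structurally different and supplies exactly the missing idea. It works with the martingale $M_n=\sum_{j<n}\bigl(V(X_{j+1})-\EE[V(X_{j+1})\mid\sigalg_j]\bigr)$ and the last-exit time $\eta$. The key observation is that on $\{\eta=k\}$ the drift condition forces \emph{two} inequalities simultaneously: $V(X_N)\leq |M_N-M_k|+\xi_k$ \emph{and} $A(N-k-1)\leq |M_N-M_k|+\xi_k$, where $\xi_k=\EE[V(X_{k+1})\mid\sigalg_k]\indic{\{X_k\in\SC{D}\}}$. The second inequality means the event $\{\eta=k\}$ can be replaced by the larger event $\{|M_N-M_k|+\xi_k\geq A(N-k-1)\}$, whose contribution is controlled by Lemma~\ref{l:aux1}(b) (Burkholder + Markov) as
\[
\EE_{x_0}\bigl[V(X_N)^r\indic{\{\eta=k\}}\bigr]\ \leq\ C\,(N-k)^{r-1-p/2}\sum_{m=k}^{N-1}\EE_{x_0}[\varphi(X_m)] \ +\ C\,(N-k)^{r-p}.
\]
Summing over $k$ and using $\varphi(x)\leq C_1(\epsilon)+\epsilon V(x)^r$ (which holds for any $r>s$) produces a self-referential inequality $\EE_{x_0}[V(X_N)^r]\leq C(\epsilon)+C\epsilon\sum_{m<N}\beta^N_m\,\EE_{x_0}[V(X_m)^r]$ with $\sum_l\beta^k_l\leq\zeta(p/2-r)<\infty$; choosing $\epsilon$ small and iterating closes the argument. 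This ``the martingale must be large, hence the event is rare, hence the sum in $k$ converges'' mechanism is what your one-step drift approach is missing.
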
			
		
	\begin{proof}[Proof of Proposition \ref{t:maintheorem2}] 

 Fix an $r \in (s,p/2-1)$. Observe that it is enough to prove the result for such an $r$.
Writing $\jumpbnd(x) = \jumpbnd(x) \indic{\{|V(x)| \leq M\}} +(\jumpbnd(x)/V^r(x)) V^r(x) \indic{\{|V(x)| > M\}}$, we can say, because of the growth assumption on $\jumpbnd$ (c.f \ref{t:maintheorem:jumpbnd}), that for every $\vep>0$, there exists a constant $\const_1(\vep)$ such that 
 $\jumpbnd(x) \leq  \const_1(\vep)+\vep V^r(x).$
 
 The  constants appearing in various estimates below will be denoted by $\const_i$'s. They will not depend on $n$ but may depend on the parameters of the system and the initial position $\inpo$.

	Define $\mart_0=0$ and
	\begin{align*}
	\mart_n = \sum_{j=0}^{n-1} V(X_{j+1}) - \EE_\inpo[V(X_{j+1})|\sigalg_j], \quad n \geq 1.
 	\end{align*}
	Then $\mart_n$ is a martingale. Fix \(N\in \N\), and define the last  time $\{X_k\}$ is in $\SC{D}$: 
			\[
				\lst \Let \max\{k\lq N\mid X_k \in \SC{D}\}.
			\]
	Notice that $\{\lst = k\} = \{X_k \in \SC{D}\} \cap \cap_{j>k}^N\{X_j \notin \SC{D}\}$.
	On $\{\lst = k\}$, for $ k < n \leq N$
	\begin{align} \non
	\mart_n - \mart_k = & V(X_n) - V(X_k) - \sum_{j=k}^{n-1} \lf( \EE_\inpo[V(X_{j+1})|\sigalg_j] - V(X_j)\ri)\\ \non
	  \geq &\ V(X_n) - V(X_k)- \lf( \EE_\inpo[V(X_{k+1})|\sigalg_k] - V(X_k)\ri) + A(n-k-1)\\ 
	   \equiv&\ V(X_n) + A(n-k-1) - \EE_\inpo[V(X_{k+1})|\sigalg_k]. \label{eq:lst-k}
	\end{align}
	It follows that on $\{\lst = k\}$,
	\begin{align*}
	V(X_N)^r \leq \lf(|\mart _N - \mart_k| + \xi_k\ri)^r, \quad
       \text{and} \quad
       A(N-k-1) \leq |\mart _N - \mart_k| + \xi_k,
        	\end{align*}
	where  $\xi_k =   \EE_\inpo[V(X_{k+1})|\sigalg_k]\indic{\{X_k \in \SC{D}\}}$.
	
\np	
    On $\{\lst= -\infty\},$  which corresponds to the case that the chain starting outside $\SC{D}$ never enters $\SC{D}$ by time $N$, we have 
  \begin{align*}
    V(X_N)^r \leq \lf(|\mart _N -\mart_0| + V(\inpo)\ri)^r, \quad \text{and} \quad  AN \leq |\mart _N-\mart_0| + V(\inpo).
     \end{align*}   	
	Thus for  $k\leq N-2$,
	\begin{align*}
	\EE_\inpo[V(X_N)^r1_{\{\lst=k\}}] \leq&\   \EE_\inpo\lf[\lf(|\mart _N - \mart_k| + \xi_k\ri)^r1_{\{\lst=k\}}\ri]\\
	             \leq &\ \EE_\inpo\lf[\lf(|\mart _N - \mart_k| + \xi_k\ri)^r1_{\{|\mart _N - \mart_k| + \xi_k \geq A(N-k-1)\}}\ri] \\
	             \leq &\ 2^{p-1} \lf(c_p(N-k)^{\f{p}{2}-1} \sum_{m=k}^{N-1} \EE_\inpo\lf[\vphi(X_m)\ri] + \EE_x\lf[\xi_k^p\ri]\ri)/(N-k-1)^{p-r}\\
	             \leq&\ \const_2 \lf((N-k)^{r-1-p/2}\sum_{m=k}^{N-1} \EE_\inpo\lf[\vphi(X_m)\ri] + (N-k)^{r-p}\ri), 
	\end{align*}
	where we used (a)\ \ref{t:maintheorem:bnds}, (b)\ Lemma \ref{l:aux1} along with the observation that 
	\begin{align*}
\EE_\inpo\lf[\abs{M_{n+1} - M_n}^p\ri] = \EE_\inpo\lf[\abs{V(X_{n+1}) - \EE[V(X_{n+1})|\sigalg_n]}^p\ri] \leq \EE_\inpo[\varphi(X_n)] 
	\end{align*}
	and (c)\ the fact that $\sup_{m\geq 2} m/(m-1) =2.$
	
	Similarly, on $\{\lst=-\infty\}$,		
	\begin{align*}
	\EE_\inpo[V(X_N)^r1_{\{\lst=-\infty\}}] \leq& \EE_\inpo\lf[\lf(|\mart _N| + V(\inpo)\ri)^r1_{\{|\mart _N| + V(\inpo) \geq AN\}}\ri] \\
	  \leq&\ 2^{p-1} \lf(c_pN^{\f{p}{2}-1} \sum_{m=0}^{N-1} \EE_\inpo\lf[\vphi(X_m)\ri] + V(\inpo)^p\ri)/N^{p-r}\\
	   \leq&\ 2^{p-1} \lf(c_pN^{r-1-\f{p}{2}} \sum_{m=0}^{N-1} \EE_\inpo\lf[\vphi(X_m)\ri] + V(\inpo)^pN^{r-p}\ri).
		\end{align*}
		Next, note that because of \ref{t:maintheorem:jumpbnd}
	\begin{align*}
	\EE_\inpo[V^p(X_N) | \sigalg_{N-1}]\indic{\{X_{N-1} \in \SC{D}\}} \leq 2^{p-1} \lf(\EE_\inpo[V(X_N) | \sigalg_{N-1}]^p\indic{\{X_{N-1} \in \SC{D}\}} + \sup_{x\in \SC{D}} \jumpbnd(x)\ri),
	\end{align*}	
	which by \ref{t:maintheorem:bnds} of course implies that for any $q \leq p$,
	$$\EE_\inpo[V(X_N)^q1_{\{\lst=N-1\}}] \leq\ \EE_\inpo[V^q(X_N)\indic{\{X_{N-1} \in \SC{D}\}}] \leq \const_3.$$
	Lastly,
	\begin{align*}
	\EE_\inpo[V(X_N)^r1_{\{\lst=N\}}] \leq& \  \EE_\inpo[V(X_N)^r1_{\{X_N \in \SC{D}\}}] \leq  \sup_{z\in \SC{D}} V^r(z), 
	\end{align*}
	Thus,
	\begin{align*}
	\EE_\inpo[V(X_N)^r] =&\ \sum_{k=0}^{N}\EE_\inpo[V(X_N)^r1_{\{\lst=k\}}] + \EE_\inpo V(X_N)^r1_{\{\lst=-\infty\}}]\\
	 \leq&\ \  \sum_{k=0}^{N-2}\EE_\inpo[V(X_N)^r1_{\{\zeta=k\}}] +\const_3+\sup_{z\in \SC{D}} V^r(z)+ \EE_\inpo[V(X_N)^r1_{\{\zeta=-\infty\}}]\\
	\leq &\ \const_4(1+ V^p(\inpo))\zeta(p-r) + \const_4\sum_{k=0}^{N-2}(N-k)^{r-1-p/2}\sum_{m=k}^{N-1} \EE_\inpo\lf[\vphi(X_m)\ri]\\
	\leq & \ \const_5(1+V^p(\inpo)) + \const_4\sum_{k=0}^{N-2}(N-k)^{r-1-p/2}\sum_{m=k}^{N-1} \EE_\inpo\lf[\const_1(\vep)+\vep V^r(X_m)\ri] \\
    \leq & \	\const_6(\vep)(1+V^p(\inpo)) + \const_4\vep\sum_{m=0}^{N-1} \beta^N_m\EE_\inpo\lf[V^r(X_m)\ri] ,
	\end{align*}
	where the choice of  $\vep$ will be specified shortly, and 
	$\beta^N_m = \sum_{k=0}^m (N-k)^{r-1-p/2}.$
	Iterating, we have
		\begin{equation}
	\label{it_eq}
	\begin{aligned}
	\EE_\inpo\bigl[ V(X_N)^r \bigr]	& \lq \const_6(\vep)(1+V^p(\inpo))\biggl(1+\const_4\vep \sum_{l_1=0}^{N-1}\beta^N_{l_1}+ (\const_4\vep)^2 \sum_{l_1=0}^{N-1}\beta^N_{l_1}\sum_{l_2=0}^{l_1-1}\beta_{l_2}^{l_1}+ \cdots\\
	& \qquad \cdots + (\const_4\vep)^{N-1}\beta^N_{N-1}\beta^{N-1}_{N-2}\hdots\beta^2_{1}\beta^1_{0}\biggr)(1+V^r(\inpo)). 
	\end{aligned}
	\end{equation}
		Notice that for any $k>0$, since $r<p/2-1$,
	\[
	\sum_{l=0}^{k-1} \beta^k_l = \sum_{l=0}^{k-1}\sum_{j=0}^l (k-j)^{r-1-p/2} =   \sum_{j=0}^{k-1} (k-j)^{r-p/2} \lq \zeta(p/2-r).
	\]
	Choosing $\vep$ so that $\const_4 \vep \zeta(p/2-r) <1$, \eqref{it_eq} yields 
	\begin{align*}
	\EE_\inpo\bigl[ V(X_N)^r \bigr]	 \leq \frac{\const_6(\vep)(1+V^p(\inpo))(1+V^r(\inpo))}{1-\const_4 \vep \zeta(p/2-r)},
	\end{align*}
	and the assertion follows.
	
		
	\end{proof}		

\vs{.2cm}
The next proposition helps to extend the above result from any $r<p/2-1$ to $\pow(s,p)$ as stipulated in Theorem \ref{t:maintheorem}. However it is also a stand-alone result that is applicable to certain models where Theorem \ref{t:maintheorem} is not directly applicable. These are cases where one directly does not have any good estimate of the conditional centered moment $\jdiff_n$ as required in Theorem \ref{t:maintheorem3}, but have suitable upper bounds for its $\|\cdot\|_\pwr$ norm. As a simple example, let $X_n$ be a stochastic process taking values in $[-\mfk{c}_0, \infty)$, whose temporal evolution is given by
$$X_{n+1} = \mfk{c}_1+X_n/2+ Y_n$$
where $\mfk{c}_0$ and $\mfk{c}_1$ are (real-valued) constants, and  $\{Y_n\}$ is an $\sigalg_n$-adapted martingale difference process (that is, $\EE(Y_{n+1}|\sigalg_n)=0$) and $\sup_n\EE(|Y_n|^p)<\infty$ for $p>2$. Then  Theorem \ref{t:maintheorem} is not  applicable, but the following proposition can be applied with $\pwr=1$ to $V(x) = x+\mfk{c}_0.$

\begin{proposition}
	\label{t:maintheorem3}
Let $\jdiff_n \equiv   \EE_\inpo\lf[|V(X_{n+1}) - \EE(V(X_{n+1})|\sigalg_n)|^p | \sigalg_n\ri]$ denote the centered conditional $p$-th moment of $V(X_{n+1})$ given $\sigalg_n$. Assume that \ref{t:maintheorem:driftbnd} and \ref{t:maintheorem:bnds} of Assumption \ref{cond_stability2} hold, and for $p>2$, some $\pwr \in [1, \infty]$ and some constant $0<\bar\cnst_\pwr(x)<\infty$,
\begin{align*}
\|\jdiff_n\|_\pwr = \EE_\inpo\lf[\jdiff_n^\theta\ri]^{1/\theta} \leq \bar\cnst_\theta(\inpo), \quad \text{ for all } n \geq 0.
\end{align*} 
Then   $\dst \cnst_r(\inpo) \doteq \sup_{n\in\N} \EE_\inpo \bigl[V(X_n)^r\bigr] < \infty$ for $0\leq r<  \bar\pow(\pwr,p),$
\begin{align*}
\bar\pow(\pwr,p) = 
\begin{cases}
p\lf(1-\f{1}{2\pwr}\ri)-1, & \quad \text{for } \theta \in \lf[1, \f{p}{2}\ri] \cup \lf(\f{p}{p-2}, \infty\ri] \text{ when } 2<p<4,\\
&\quad \text{for any }\ \theta \geq 1 \text{ when } p > 4; \\
p-2, & \quad \text{for } \theta \in \lf(\f{p}{2}, \f{p}{p-2}\ri] \text{ when } 2<p<4.
\end{cases}
\end{align*}
\end{proposition}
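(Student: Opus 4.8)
The plan is to keep the architecture of the proof of Proposition~\ref{t:maintheorem2} --- the martingale $\mart_n\doteq\sum_{j=0}^{n-1}\bigl(V(X_{j+1})-\EE_\inpo[V(X_{j+1})\mid\sigalg_j]\bigr)$, the last exit time $\lst\doteq\max\{k\le N\mid X_k\in\SC{D}\}$ (with $\lst=-\infty$ when $X_k\notin\SC{D}$ for all $k\le N$), and the split $\EE_\inpo[V(X_N)^r]=\sum_{k}\EE_\inpo[V(X_N)^r\indic{\{\lst=k\}}]+\EE_\inpo[V(X_N)^r\indic{\{\lst=-\infty\}}]$ --- and to squeeze more out of each summand. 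As a warm-up I would first re-run the argument of Proposition~\ref{t:maintheorem2} verbatim, except that the growth hypothesis \ref{t:maintheorem:jumpbnd} is unavailable and is replaced, wherever it is used, by $\EE_\inpo[\g_m]=\EE_\inpo[\jdiff_m]\le\|\jdiff_m\|_\pwr\le\bar\cnst_\pwr(\inpo)$ (and $\EE_\inpo[\jdiff_{N-1}]\le\bar\cnst_\pwr(\inpo)$ for the $\lst=N-1$ term, used together with \ref{t:maintheorem:bnds} and $r<p$). Lemma~\ref{l:aux1}(b), applied on $\{\lst=k\}$ with $\Theta=\xi_k\doteq\EE_\inpo[V(X_{k+1})\mid\sigalg_k]\indic{\{X_k\in\SC{D}\}}$ and $b=A(N-k-1)$, then yields $\EE_\inpo[V(X_N)^r\indic{\{\lst=k\}}]\le\const\bigl((N-k)^{r-p/2}+(N-k)^{r-p}\bigr)$ directly, with no iteration at all; the boundary terms $\lst\in\{N-1,N\}$ are $O(1)$ by \ref{t:maintheorem:bnds}, and the $\lst=-\infty$ term is $\le\const\,N^{r-p/2}\to0$. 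This re-proves $\cnst_r(\inpo)<\infty$ for every $r<p/2-1$; the remaining task is to push the exponent up to $\bar\pow(\pwr,p)$.

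The extra input is that on $\{\lst=k\}$ one has more than $V(X_N)\le|\mart_N-\mart_k|+\xi_k$: iterating the negative drift \ref{t:maintheorem:driftbnd} gives, for \emph{every} $j\in(k,N]$,
\[
\mart_j-\mart_k+\xi_k\ \ge\ A(j-k-1)+V(X_j)\ \ge\ A(j-k-1),
\]
so that $\{\lst=k\}\cap\{V(X_N)>v\}$ forces the mean‑zero martingale $(\mart_j-\mart_k)_{j\ge k}$ to sit above a line of slope $A$ throughout a window of length $N-k$ and to overshoot it by $v$ at the right endpoint --- a large‑deviation event for $\mart$. The crux is to bound the probability of this event \emph{using only the $\SC{L}^\pwr$‑control of the conditional $p$‑th moments}. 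I would proceed by a multi‑scale, Fuk--Nagaev‑type truncation: discard, at each time $i$, increments exceeding $\tfrac12(v+A(N-i))$, so the ``big‑jump'' set (handled globally) has probability $\le\sum_{i<N}\bar\cnst_\pwr(\inpo)\bigl(\tfrac12(v+A(N-i))\bigr)^{-p}\le\const\,v^{1-p}$; for the truncated martingale the predictable quadratic variation is dominated by $\sum_m\g_m^{2/p}$, whose $\SC{L}^{\pwr p/2}$‑norm over a window of length $\ell$ is $\le\const\,\ell$ because $\|\g_m^{2/p}\|_{\pwr p/2}=\|\g_m\|_\pwr^{2/p}\le\bar\cnst_\pwr(\inpo)^{2/p}$, and feeding this --- rather than the mere $\SC{L}^1$ bound used in the warm‑up --- into a Bernstein‑type deviation inequality should replace the diffusive factor $(N-k)^{p/2}$ by $(N-k)^{p/(2\pwr)}$. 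The target estimate is
\[
\PP\bigl(\{\lst=k\}\cap\{V(X_N)>v\}\bigr)\ \le\ \const\,(N-k)^{p/(2\pwr)}\bigl(v+A(N-k)\bigr)^{-p}\ +\ (\text{lower order}),
\]
with the same bound for the $\lst=-\infty$ term ($N-k$ replaced by $N$). Summing over $k$ then gives $\PP(V(X_N)>v)\le\const\,v^{\,1-p+p/(2\pwr)}$, uniformly in $N$, and integrating against $v^{r-1}\,dv$ yields $\cnst_r(\inpo)=\sup_{n}\EE_\inpo[V(X_n)^r]<\infty$ for $r<p-1-\tfrac{p}{2\pwr}=p\bigl(1-\tfrac1{2\pwr}\bigr)-1$.

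Finally, the dichotomy in $\bar\pow(\pwr,p)$ is accounted for by a competing ``Rosenthal jump term'' carried by the truncated martingale, of size $\sum_m\bigl(\tfrac12(v+A(N-m))\bigr)^{1-p}\le\const\,v^{2-p}$: this is in the right order relative to the diffusive contribution $v^{1-p+p/(2\pwr)}$ exactly when $p/(2\pwr)\le1$, i.e. $\pwr\le p/2$, in which case it is harmless; when $2<p<4$ and $\pwr\in(p/2,p/(p-2)]$ the two cannot be reconciled and one is left with the cruder conclusion $r<p-2$; and when $p\ge4$ the range $(p/2,p/(p-2)]$ is empty, so $p(1-\tfrac1{2\pwr})-1$ holds for all $\pwr\ge1$. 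I expect the hardest step to be precisely this Bernstein/Fuk--Nagaev martingale deviation bound with $\SC{L}^\pwr$‑controlled (rather than bounded) quadratic variation --- converting ``conditional $p$‑th moments in $\SC{L}^\pwr$'' into a genuine improvement of the power of $N-k$ from $p/2$ to $p/(2\pwr)$ while keeping all constants independent of $N$ and everything integrable against $v^{r-1}\,dv$ --- together with the bookkeeping that makes the multi‑scale truncation and the last‑exit decomposition fit together; when $\pwr=\infty$ these estimates degenerate to the bounded‑increment case and one recovers the bound of \cite{ref:PemRos-99}.
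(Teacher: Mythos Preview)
Your warm-up (the direct proof of $r<p/2-1$ under the $\SC{L}^\pwr$-hypothesis, with no iteration) is correct and matches the paper's logic. Your diagnosis of where the extra exponent must come from --- exploiting that on $\{\lst=k\}$ the martingale $\mart_j-\mart_k+\xi_k$ stays above the line $A(j-k-1)$ for \emph{all} $j\in(k,N]$ --- is also right, and your target tail bound $\PP(\{\lst=k\}\cap\{V(X_N)>v\})\lesssim (N-k)^{p/(2\pwr)}(v+A(N-k))^{-p}$ yields exactly the claimed threshold $p(1-\tfrac{1}{2\pwr})-1$ after summation.

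Where you diverge from the paper is in how to get that improved power of $N-k$. You reach for a multi-scale Fuk--Nagaev/Bernstein martingale inequality with $\SC{L}^\pwr$-controlled predictable quadratic variation, and you flag this as the hard step. The paper sidesteps this entirely with a much more elementary device: for each $k$ it introduces a second (non-stopping) time
\[
\hft=\hft(N,k)\doteq\inf\{j\ge k:\mart_j-\mart_k+\xi_k\ge A(N-k-1)/2\},
\]
and decomposes $\{\lst=k\}=\bigcup_{j=k}^N\{\lst=k,\hft=j\}$. On $\{\lst=k,\hft=j\}$ one has $\mart_N-\mart_{j-1}\ge A(N-k-1)/2+V(X_N)$, so $V(X_N)^r$ is bounded by $|\mart_N-\mart_{j-1}|^r\indic{\{|\mart_N-\mart_{j-1}|>A(N-k-1)/2\}}$, and Lemma~\ref{l:aux1} controls this by $(N-k)^{p/2-1}\sum_{l\ge j}\EE[\jdiff_l\mid\sigalg_j]$ times $(N-k)^{r-p}$. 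After Fubini in $(j,l)$ the inner sum becomes $\sum_l\EE[\jdiff_l\indic{\{\hft\le l\}}]$, and a single H\"older step gives $\|\jdiff_l\|_\pwr\,\PP(\hft\le l)^{1/\pwr^*}$; Doob's maximal inequality then bounds $\PP(\hft\le l)\le\const(l-k)^{p/2}/(N-k)^p$, and summing in $l$ produces exactly the factor $(N-k)^{1-p/(2\pwr^*)}$. A parallel estimate on $\sum_j\EE[|\mart_j-\mart_{j-1}|^p\indic{\{|\mart_{j-1}-\mart_k|+\xi_k>A(j-k-2)\}}]$ (again H\"older with $\|\jdiff_{j-1}\|_\pwr$ and the tail probability) is what generates the second term responsible for the $p-2$ case when $2<p<4$ and $\pwr\in(p/2,p/(p-2)]$.

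So the paper needs nothing beyond Burkholder, Doob, and H\"older. Your route may well work, but the Bernstein/Fuk--Nagaev step you leave open is genuinely nonstandard --- converting an $\SC{L}^\pwr$ bound on the conditional $p$-th moments into the exponent $p/(2\pwr)$ is not something off-the-shelf martingale deviation inequalities deliver --- whereas the paper's stopping-time trick achieves the same exponent with tools already in hand. If you want to complete your version, the cleanest fix is to abandon the concentration-inequality framing and adopt the $\hft$-decomposition instead; your ``Rosenthal jump term'' $\sum_m(v+A(N-m))^{1-p}\lesssim v^{2-p}$ then corresponds exactly to the paper's $\SC{A}$-term, and the dichotomy in $\bar\pow(\pwr,p)$ falls out from whether $p/(2\pwr^*)>1$.
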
	

Here  $\theta = \infty$ cooresponds to the case that  $\jdiff_n=\EE_\inpo \lf[|V(X_{n+1}) - \EE(V(X_{n+1})|\sigalg_n)|^p | \sigalg_n\ri] \leq \bar \cnst $ a.s, for some constant $ \bar \cnst>0$.

\begin{proof}[Proof of Proposition \ref{t:maintheorem3}] 
	The  constants appearing in various estimates below (besides the ones that appeared before) will be denoted by $\hat C_i$'s. They will not depend on $n$ but may depend on the parameters of the system and the initial position $\inpo$.
	
	Define $\mart_n$, $\lst$ and $\xi_k$ as in the proof of Proposition \ref{t:maintheorem2}. 	
Fix $N$,  $0\leq k \leq N$, and define $\hft \equiv \hft(N,k)$ by
\begin{align*}
\hft = \inf\{j\geq k: \mart_j - \mart_k + \xi_k \geq A(N-k-1)/2\}.
\end{align*}
Clearly, $\hft \leq N$. For $j>k$,  notice that on $\{\hft = j\}$, 
\begin{align*}
\mart_{j-1} - \mart_k + \xi_k \leq A(N-k-1)/2,
\end{align*}
and hence on $\{\lst = k\}\cap\{\hft = j\}$
\begin{align*}
\mart_N - \mart_{j-1} \geq A(N-k-1)/2 + V(X_N).
\end{align*}
It follows that for $j>k$
\begin{align*}
\EE_\inpo[V(X_N)\indic{\{\lst=k\}}\indic{\{\hft=j\}}] \leq \EE_\inpo\lf[|\mart_N - \mart_{j-1}|^r\indic{\{|\mart_N - \mart_{j-1}|>A(N-k-1)/2\}}\indic{\{|\mart_{j-1} - \mart_{k}|+\xi_k>A(j-k-2) \vee 0\}}\indic{\{\hft=j\}}\ri].
\end{align*}
Notice that $\SC{S}_j \equiv \EE_\inpo\lf[|\mart_N - \mart_{j-1}|^r\indic{|\mart_N - \mart_{j-1}|>A(N-k-1)/2} | \sigalg_j\ri] $ can be estimated by Lemma \ref{l:aux1} as
\begin{align*}
\SC{S}_j  \leq&\  (2/A(N-k-1))^{p-r}\EE_\inpo\lf[|\mart_N - \mart_{j-1}|^p | \sigalg_j\ri]  \\
\leq &\ \hat C_0 \lf[\EE_\inpo\lf[|\mart_N - \mart_{j}|^p | \sigalg_j\ri] +|\mart_j - \mart_{j-1}|^p \ri] / (N-k-1)^{p-r}\\
\leq &\ \hat C_0 \lf[\const_0 (N-j)^{\f{p}{2}-1}\EE_\inpo\lf[\sum_{l=j}^{N-1} \jdiff_l | \sigalg_j\ri] +|\mart_j - \mart_{j-1}|^p \ri] / (N-k-1)^{p-r}.
\end{align*}
Also, for $\hft=k$ by  Lemma  \ref{l:aux1}, 
\begin{align*}
	\EE_\inpo[V(X_N)\indic{\{\lst=k\}}\indic{\{\hft=k\}}] 
	\leq&\ \EE_\inpo\lf[\indic{\{\hft=k\}} \EE_\inpo\lf[(|\mart_N - \mart_{k}| +\xi_k)^r\indic{||\mart_N - \mart_{k}| +\xi_k>A(N-k-1)} | \sigalg_k\ri]\ri]\\
	\leq &\ \hat C_1\EE_\inpo\lf[\indic{\{\hft=k\}}\lf((N-k-1)^{r-p/2-1}\sum_{l=k}^{N-1}\EE_\inpo\lf[\jdiff_l|\sigalg_k\ri] + (N-k-1)^{r-p}|\xi_k|^p \ri)\ri].
\end{align*}

Hence,
\begin{align}
\non
\EE_\inpo[V(X_N)\indic{\{\lst=k\}}] =&\ \sum_{j=k}^N\EE_\inpo[V(X_N)\indic{\{\lst=k\}}\indic{\{\hft=j\}}] \leq \ \hat C_1(N-k-1)^{r-p/2-1}\EE_\inpo\lf[\indic{\{\hft=k\}}\sum_{l=k}^{N-1}\EE_\inpo\lf[\jdiff_l|\sigalg_k\ri]\ri]\\
\non
& \hs{.5cm}+ \hat C_1 (N-k-1)^{r-p}\bar \cnst(\inpo) +  \sum_{j=k+1}^N\EE_\inpo\lf[\indic{\{\hft=j\}} \indic{\{|\mart_{j-1} - \mart_{k}|+\xi_k>A(j-k-2)\}} \SC{S}_j\ri] \\
\non
\leq &\  \hat C_2\Big[(N-k-1)^{r-\f{p}{2}-1} \sum_{j=k}^{N}\sum_{l=j}^{N-1}\EE_\inpo\lf[\jdiff_l\indic{\{\hft=j\}}\ri] + (N-k-1)^{p-r} \\
& \hs{.3cm}+  (N-k-1)^{r-p}\sum_{j=k+1}^N|\mart_j - \mart_{j-1}|^p\indic{\{|\mart_{j-1} - \mart_{k}|+\xi_k>A(j-k-2)\vee 0\}}\Big].
\label{eq:V-est-0}
\end{align}
We next estimate the above two terms separately, and for that we need the following bound which is an immediate consequence of Doob's maximal inequality and assumption:
\begin{align}
	\PP_\inpo\lf(\max_{k\leq j \leq l}|\mart_j-\mart_k| +\xi_k > \Upsilon\ri) \leq&\ \EE_\inpo\lf[\max_{k\leq j \leq l}|\mart_j-\mart_k| +\xi_k\ri]^p/\Upsilon^p \leq \hat C_3\lf((l-k)^{p/2}+\bar \cnst_0(x)\ri)/\Upsilon^p \label{prob-est}
	\end{align}  

Now notice that
\begin{align}
\non
\sum_{j=k}^{N-1}\sum_{l=j}^{N-1}\EE_\inpo\lf[\jdiff_l\indic{\{\hft=j\}}\ri] =&\ \sum_{l=k}^{N-1}\sum_{j=k}^l\EE_\inpo\lf[\jdiff_l\indic{\{\hft=j\}}\ri] = \sum_{l=k}^{N-1}\EE_\inpo\lf[\jdiff_l\indic{\{\hft \leq l\}}\ri]
\leq \ \sum_{l=k}^{N-1}\|\jdiff_l\|_\pwr\PP_\inpo(\hft \leq l)^{1/\pwr^*} \\
\non
\leq &\ \bar\cnst_\theta(x) \sum_{l=k}^{N-1}\PP_\inpo\lf(\max_{k\leq j \leq l}\lf(|\mart_j-\mart_k| +\xi_k\ri) > A(N-k-1)/2\ri)^{1/\pwr^*} \\
\non
\leq & \lf(\f{2}{A}\ri)^{p/\pwr^*}\bar\cnst_\theta(x) \sum_{l=k}^{N-1}\lf(\f{\hat C_3\lf((l-k)^{p/2}+\bar\cnst_0(x)\ri)}{(N-k-1)^p}\ri)^{1/\pwr^*}\\
\leq &\ \hat C_4 (N-k-1)^{-\f{p}{2\pwr^*}+1} =  \hat C_4 (N-k-1)^{-\f{p}{2}\lf(1-\f{1}{\pwr}\ri)+1},
\label{eq:V-est-1}
\end{align}
where $\f{1}{\pwr}+\f{1}{\pwr^*}=1$.

Next notice that the term
$$\SC{A} \equiv \sum_{j=k+1}^N\EE_\inpo\lf[|\mart_j - \mart_{j-1}|^p\indic{\{|\mart_{j-1} - \mart_{k}|+\xi_k>A(j-k-2)\}}\ri]$$
can be estimated as
\begin{align} 
\non
\SC{A} \leq &\ \|\jdiff_{k+1}\|_\theta+\|\jdiff_{k+2}\|_\theta +  \sum_{j=k+3}^N\EE_\inpo\lf[\jdiff_{j-1}\indic{\{|\mart_{j-1} - \mart_{k}|+\xi_k>A(j-k-2)\}}\ri]\\
\non
 \leq &\ 2\bar\cnst_\theta(x)+ \sum_{j=k+3}^N\|\jdiff_{j-1}\|_\pwr\PP_\inpo\lf(|\mart_{j-1} - \mart_{k}|+\xi_k>A(j-k-2)\ri)^{1/\pwr^*}\\
 \non
\leq &\   \bar\cnst_\theta(x)\lf[2+A^{-p/\theta^*}\sum_{j=k+3}^N \lf(\hat C_3\lf((j-k-1)^{p/2}+\bar \cnst_0(x)\ri)/(j-k-2)^p\ri)^{1/\pwr^*}\ri], \\
\non
\leq&\ \hat C_5\lf[1+\sum_{j=k+3}^N 1/(j-k-2)^{p/2\theta^*}\ri]\\
\leq&\ 
\begin{cases}
 \hat C_6,& \quad \text{ if } p/2\pwr^* = p(1-1/\theta)/2 >1,\\
\hat  C_7(N-k-1),& \quad \text{ otherwise},
\end{cases}
\label{eq:A-est}
\end{align}
where the third inequality is by  \eqref{prob-est}.
We now consider some cases.

{\em Case 1: $\theta \leq p/2$:} Suppose that  $r < p\lf(1-\f{1}{2\theta}\ri)-1$. Notice in this case this implies that $p-r-1 > \f{p}{2\theta} \geq 1.$
It follows from \eqref{eq:V-est-0}, \eqref{eq:V-est-1} and \eqref{eq:A-est} (second case) that 
\begin{align*}
\EE_\inpo[V^r(X_N)] =&\ \sum_{k=0}^N\EE_\inpo[V^r(X_N)\indic{\{\lst=k\}}] \leq \sum_{k=0}^{N-2}\EE_\inpo[V^r(X_N)\indic{\{\lst=k\}}] +\EE_\inpo[V^r(X_N)\indic{\{X_{N-1} \in \SC{D}\}}]+  \sup_{x \in \SC{D}} V^r(x)\\
\leq &\  \hat C_8\lf[\sum_{k=0}^{N-2} (N-k-1)^{r-\f{p}{2}-1-\f{p}{2}\lf(1-\f{1}{\pwr}\ri)+1} +\sum_{k=0}^{N-2} (N-k-1)^{r-p}+ \sum_{k=0}^{N-2} (N-k-1)^{r-p+1}\ri]\\
&\ + \const_3+\sup_{x \in \SC{D}} V^r(x)\\
 = &\ \hat C_8\lf( \zeta\lf( p\lf(1-\f{1}{2\theta}\ri)-r\ri)+ \zeta(p-r)+\zeta(p-r-1)\ri)+  \const_3+\sup_{x \in \SC{D}} V^r(x).
 \end{align*}
{\em Case 2: $\theta > p/2$, and $p \geq 4$:} Suppose that  $r < p\lf(1-\f{1}{2\theta}\ri)-1$. Notice that $\theta > p/2$, and $p \geq 4$ imply that $p/2\theta^* = p(1-1/\pwr)/2 >1$. Like the previous case, it again follows from \eqref{eq:V-est-0}, \eqref{eq:V-est-1} and \eqref{eq:A-est} (first case)
  \begin{align*}
\EE_\inpo[V^r(X_N)]  \leq& \ \sum_{k=0}^{N-2}\EE_\inpo[V^r(X_N)\indic{\{\lst=k\}}] +\EE_\inpo[V^r(X_N)\indic{\{X_{N-1} \in \SC{D}\}}]+  \sup_{x \in \SC{D}} V^r(x)\\
 \leq &\ \hat C_9\lf(\sum_{k=0}^{N-2} (N-k)^{r- p\lf(1-\f{1}{2\theta}\ri)} + \sum_{k=0}^{N-1} (N-k-1)^{r-p}\ri)+  \const_3+\sup_{x \in \SC{D}} V^r(x)
 \leq \ \hat C_{10}.
 \end{align*}
 The other cases in the assertion follow similarly once we observe that $\theta > p/(p-2) \LRT p/2\theta^* >1$ and for $2<p<4$, $p/2 < p/(p-2)$.

\end{proof}

\subsection{Ergodicity of Markov processes}

Theorem \ref{t:maintheorem} leads to the following result on Harris ergodicity of Markov processes.

\begin{definition}
A function $V:\metsp \rt [0,\infty)$ is {\em inf-compact} if the level sets, $\SC{K}_m = \{x:V(x) \leq m\}$ are compact for all $m\geq 0$.
\end{definition}
Note that an inf-compact function $V$ is lower-semicontinuous.	 

\begin{theorem}\label{MP-stab}
Let $\{X_n\}$ be a Markov process taking values in a locally compact separable space $\metsp$ with transition kernel $\tk$. Suppose for an inf-compact function $V:\metsp \rt [0,\infty)$, the following conditions hold:
\begin{enumerate}[label={\rm (\ref{MP-stab}-\alph*)}, widest=b, leftmargin=*, align=left, nolistsep]
		\item \label{MP-stab-i} for all $n\in N$, 
		$$\tk V(x) - V(x) \leq  -A, \quad \text{on } \ \{x \notin  \SC{D}\};$$
		\item \label{MP-stab-ii} for some $p > 2$
		$$\tk|V(\cdot) - \tk V(x)|^p(x) = \int |V(y) - \tk V(x)|^p\tk(x,dy) \leq \jumpbnd(x),$$
		where $\jumpbnd: \metsp \rt [0,\infty]$ satisfies $\jumpbnd(x) \leq \const_{\jumpbnd}(1+V^s(x))$ for some $s<p/2-1$ and some constant $\const_{\jumpbnd}>0$. This is of course same as  requiring  $ \EE\lf[\big|V(X_{n+1}) - \tk V(X_{n})\big|^p \Big| \sigalg_n\ri] \lq \jumpbnd(X_n).$
		\item  \label{MP-stab-iii} $\sup_{x\in  \SC{D}}V(x)<\infty,$ and $\sup_{x\in  \SC{D}}\tk V(x)<\infty,$
\end{enumerate}
Also, suppose that 
\begin{enumerate}[label={\rm (\ref{MP-stab}-d)}, widest=b, leftmargin=*, align=left, nolistsep]
\item \label{MP-stab-iv} $\tk$ is weak Feller, $\psi$-irreducible, and admits a density $\tkd$ with respect to some Radon  measure $\mu$, that is, $\tk(x,dy)=\tkd(x,y)\mu(dy)$, and that for every compact set $\SC{K}$, there exists a constant $\mfk{c}_{\SC{K},0}$
 such that $$\sup_{y \in \SC{K}}\tkd(x,y) \leq \mfk{c}_{\SC{K},0}\lf(1+V^r(x)\ri).$$ 
\end{enumerate}
Then
\begin{enumerate}[label={\rm(\roman*)}]
\item Under \ref{MP-stab-i} - \ref{MP-stab-iii}, $\sup_n\EE_\inpo(V^r(X_n)) \equiv \sup_n \tk^nV^r(\inpo)  < \infty$ for any $0\leq r<\pow(s,p)$, where $\pow(s,p)$ is as in Theorem \ref{t:maintheorem}.
\item Under additional assumption of \ref{MP-stab-iv}, $\{X_n\}$ is positive Harris recurrent (PHR) and aperiodic with a unique invariant distribution $\pi$, and for any $x_0$ and $r \in (0,\pow(s,p))$
\begin{align}\label{eq:conv-1}
\int (V^{r}+1)d|\tk^n(\inpo,\cdot) -\pi| \rt 0 \quad \text{as } n \rt \infty;
\end{align}
or equivalently,
\begin{align}\label{eq:conv-2}
\|\tk^n(\inpo,\cdot) -\pi\|_{V^r+1} \doteq \sup_{f:|f| \leq V^r+1}|\tk^nf(\inpo) -\pi(f)| \rt 0, \quad \text{as } n \rt \infty.
\end{align}
\end{enumerate}
\end{theorem}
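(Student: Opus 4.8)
\emph{Part (i).} For a homogeneous Markov chain $\EE_{\inpo}[V(X_{n+1})\mid\sigalg_n]=\tk V(X_n)$, so \ref{MP-stab-i} is exactly \ref{t:maintheorem:driftbnd}; the centered conditional $p$-th moment is $\jdiff_n=\int|V(y)-\tk V(X_n)|^p\,\tk(X_n,dy)$, so \ref{MP-stab-ii} gives \ref{t:maintheorem:jumpbnd} with the stated growth of $\jumpbnd$; and $\sup_{\SC D}V<\infty$ together with $\sup_{\SC D}\tk V<\infty$ from \ref{MP-stab-iii} give \ref{t:maintheorem:bnds} via the simpler sufficient condition noted in the remark after Theorem \ref{t:maintheorem}, since on $\{X_n\in\SC D\}$ one has $\EE_{\inpo}[V(X_{n+1})\mid\sigalg_n]=\tk V(X_n)\leq\sup_{\SC D}\tk V$. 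Hence Theorem \ref{t:maintheorem} applies and $\sup_n\tk^nV^r(\inpo)=\sup_n\EE_{\inpo}[V^r(X_n)]<\infty$ for all $0\leq r<\pow(s,p)$, which is assertion (i). The rest of the plan bootstraps (ii) from this moment bound together with \ref{MP-stab-iv}.

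\emph{An invariant probability measure and transfer of moments.} Fix $r\in(0,\pow(s,p))$ and choose $r''\in(r,\pow(s,p))$. Since $V$ is inf-compact, $V^{r''}$ is nonnegative, lower semicontinuous, and has compact sublevel sets, so Markov's inequality gives $\tk^n(\inpo,\{V^{r''}>m\})\leq\cnst_{r''}(\inpo)/m$; hence $\{\tk^n(\inpo,\cdot)\}_n$ and the averages $\mu_N:=N^{-1}\sum_{n=1}^N\tk^n(\inpo,\cdot)$ are tight. A subsequence of $\{\mu_N\}$ converges weakly to a probability measure $\pi$, and the weak Feller property together with a standard Krylov--Bogolyubov argument forces $\pi\tk=\pi$. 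By the portmanteau theorem $\pi(V^{r''})\leq\liminf_N\mu_N(V^{r''})\leq\cnst_{r''}(\inpo)<\infty$, so in particular $\pi(V^r+1)<\infty$. Moreover, the density bound in \ref{MP-stab-iv} gives, for every Borel $A$ inside a compact $\SC K$ and every $n$, $\tk^{n+1}(\inpo,A)=\int\tk^n(\inpo,dx)\int_A\tkd(x,y)\,\mu(dy)\leq\mfk{c}_{\SC{K},0}\,\mu(A)\,(1+\cnst_r(\inpo))$, and likewise $\pi(A)\leq\mfk{c}_{\SC{K},0}\,\mu(A)\,(1+\cnst_r(\inpo))$; thus $\pi$ and all $\tk^n(\inpo,\cdot)$ are dominated on compact sets by a fixed multiple of $\mu$, and in particular $\pi\ll\mu$.

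\emph{Positive Harris recurrence, aperiodicity, and the $V^r+1$-norm.} The chain is $\psi$-irreducible, weak Feller, and has a transition density with respect to the Radon measure $\mu$, so by the standard theory of $\psi$-irreducible chains \cite{ref:MeyTwe-09} it is aperiodic and every compact set is petite; a $\psi$-irreducible chain admitting an invariant probability measure is positive recurrent with that measure as its unique invariant distribution, and with petite compacts this recurrence is Harris recurrence, so $\{X_n\}$ is positive Harris recurrent and $\|\tk^n(x,\cdot)-\pi\|_{TV}\to0$ for every $x$. To upgrade this, note $\sup_n\EE_{\inpo}[(V^r(X_n))^{r''/r}]=\cnst_{r''}(\inpo)<\infty$ with $r''/r>1$, so $\{V^r(X_n)\}_n$ is uniformly integrable under $\PP_{\inpo}$; for any measurable $f$ with $|f|\leq V^r+1$ and any $m>0$,
\[
\bigl|\tk^nf(\inpo)-\pi(f)\bigr|\leq(m+1)\,\|\tk^n(\inpo,\cdot)-\pi\|_{TV}+\EE_{\inpo}\bigl[(V^r(X_n)+1)\indic{\{V^r(X_n)>m\}}\bigr]+\pi\bigl((V^r+1)\indic{\{V^r>m\}}\bigr).
\]
Given $\vep>0$, large $m$ makes the last two terms $<\vep/2$ uniformly in $n$ (by uniform integrability and $\pi(V^r+1)<\infty$), and then large $n$ makes the first term $<\vep/2$; since the bound is uniform over $\{f:|f|\leq V^r+1\}$ this gives \eqref{eq:conv-2}, the standard reformulation of \eqref{eq:conv-1}.

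\emph{Main obstacle.} The delicate point is the middle step: obtaining positive Harris recurrence and aperiodicity from the ``soft'' estimate of Theorem \ref{t:maintheorem} \emph{without} a Foster--Lyapunov drift in the form of \eqref{FL-drift} --- hypothesis \ref{MP-stab-iv} is precisely what supplies the topological regularity (petiteness of compacts, a transition density) that \eqref{FL-drift} would otherwise force. The concluding interpolation from total-variation to $V^r+1$-norm convergence is routine but genuinely uses that Theorem \ref{t:maintheorem} bounds a moment of order \emph{strictly} above $r$, which is why part (ii) is stated for $r$ in the open interval $(0,\pow(s,p))$.
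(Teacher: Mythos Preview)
Your Part (i) is correct and matches the paper's reduction to Theorem \ref{t:maintheorem}. The final interpolation from total-variation to $(V^r+1)$-norm convergence via a higher moment $r''>r$ is also fine and is essentially what the paper does (the paper uses H\"older, you use uniform integrability; either works).

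The gap is in your middle paragraph. You assert that ``$\psi$-irreducible, weak Feller, and has a transition density with respect to the Radon measure $\mu$, so by the standard theory \ldots\ it is aperiodic and every compact set is petite.'' Neither conclusion follows from those hypotheses alone. Weak Feller plus $\psi$-irreducibility does \emph{not} make compact sets petite in general; for that one needs the $T$-chain property (e.g.\ a lower-semicontinuous minorizing component), which an arbitrary density does not supply. And nothing in \ref{MP-stab-iv} rules out periodicity: having a density with respect to a Radon measure is perfectly compatible with a periodic chain. Note also that you never use the specific growth bound $\sup_{y\in\SC K}\tkd(x,y)\leq \mfk c_{\SC K,0}(1+V^r(x))$ in any essential way; you only extract domination on compacts from it, which is much weaker than what that bound actually gives.

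The paper's route is genuinely different here and explains why that density bound is in the hypotheses. It first uses the bound to iterate: for $y$ in a compact $\SC K'$,
\[
\tkd^n(x,y)=\int \tkd^{n-1}(x,z)\tkd(z,y)\,\mu(dz)\leq \mfk c_{\SC K',0}\bigl(1+\tk^{n-1}V^r(x)\bigr)\leq \const_{\SC K'}(x),
\]
uniformly in $n$ by Part (i). This uniform $n$-step density bound, together with weak convergence $\tk^n(\inpo,\cdot)\Rightarrow\pi$ and an $L^1(\mu)$-approximation of $f\indic{\SC K}$ by compactly supported continuous functions, yields \emph{setwise} convergence $\tk^n(\inpo,A)\to\pi(A)$ for every Borel $A$. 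The paper then invokes \cite[Theorem 4.3.4]{HL03}: for a $\psi$-irreducible chain, setwise convergence of $\tk^n(x,\cdot)$ to an invariant probability is \emph{equivalent} to PHR plus aperiodicity (and to total-variation convergence). So PHR and aperiodicity are \emph{outputs} of the argument, not inputs --- which is precisely the point the paper stresses in the introduction. Your shortcut through ``petite compacts and aperiodicity from standard theory'' bypasses exactly the step where \ref{MP-stab-iv} does its work.
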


\begin{proof}

(i) follows from the Theorem \ref{t:maintheorem}. 
Since $V$ is inf-compact, it follows from (i) that for every $x_0$, $\{\tk^n(x_0,\cdot)\}$ is tight, and let $\pi$ be one of its limit point. Since $\tk$ is weak Feller, by the Krylov-Bogolyubov theorem \cite[Theorem 7.1]{ref:daP-06}, \(\pi\) is invariant for $\tk$, and uniqueness of $\pi$ follows from the assumption of $\psi$-irreducibility  \cite[Proposition 4.2.2]{HL03} . Hence, for every $x_0$, $\tk^n(x_0,\cdot) \RT \pi$ (along the full sequence) as $n \rt \infty$.

For (ii) we start by establishing the following claim.

\np
{\em Claim:} Suppose that $f \leq V^r + 1$ for some $r \in (0,\pow(s,p))$. Then $\tk^nf(x_0) \rt \pi(f)$ as $n \rt \infty$ for any $x_0 \in \metsp$.

\np
Since $V$ is lower semi-continuous we have by  (generalized) Fatou's lemma,
\begin{align*}
\pi(V^r) \leq \liminf_{n\rt \infty} \tk^nV^r(x_0) \leq \cnst_r(x_0)
\end{align*}
for any $r \in (0,\pow(s,p))$. Now let $f \leq V^r+1$ for some $r \in (0,\pow(s,p))$ and fix $\vep>0$.

Since $\{\tk^n(x_0,\cdot)\}$ is tight, for a given $\tilde\vep>0$, there exists a compact set $\SC{K}$ (which depends on $x_0$ and which we take of the form $ \SC{K}_{m} = \{x:V(x) \leq m\}$ for sufficiently large $m$) such that
$$\sup_n \tk^n(x_0,\SC{K}^c) \leq \tilde\vep, \quad \text{ and } \quad \pi(\SC{K}^c) \leq \tilde\vep.$$
Now by H\"older's inequality
\begin{align}
\non
\tk^nf1_{\SC{K}^c}(x_0) =&\ \int f(y)1_{\SC{K}^c}(y)\tk^n(x_0,dy)  \leq  \int (V^r(y)+1)1_{\SC{K}^c}(y)\tk^n(x_0,dy) \\
\non
\leq&\  \lf(\int V^{r'}(y)\tk^n(x_0,dy)\ri)^{r/r'}\lf( \int \indic{\SC{K}^c}(y)\tk^n(x_0,dy)\ri)^{1 - r/r'} + \tk^n(x_0, \SC{K}^c ),\\
\label{eq:Kout}
\leq&\ \cnst_{r'}^{r/r'}(x_0) \tilde\vep^{1 - r/r'} + \tilde\vep
\end{align}
for some $r<r'<\pow(s,p)$.
Similarly, $\pi(f\indic{\SC{K}^c}) \leq \cnst_{r'}^{r/r'}(x) \tilde\vep^{1 - r/r'} +\tilde\vep$.

Since $f\indic{\SC{K}} \in L^1(\mu)$, there exist $\{h_m\} \subset C_c(\metsp,\R)$  such that $h_m \rt f1_{\SC{K}}$ in $L^1(\mu)$ as $m\rt \infty$, and  $\sup_x|h_m(x)| \leq \sup_{x\in \SC{K}}|f(x)|$ for $m\geq 1$. In fact, we can choose $\{h_m\}$ such that $supp(h_m) \subset \SC{K}' \supset \SC{K}.$ 
for some compact set $\SC{K}'$.

Observe that for $x\in \metsp$ $y \in \SC{K}'$
\begin{align*}
q^n(x,y) =&\  \int q^{n-1}(x,z)q(z,y) d\mu(z) \leq   \int q^{n-1}(x,z)\mfk{c}_{\SC{K'},0}\lf(1+V^r(z)\ri) d\mu(z) \\
\leq &\ \mfk{c}_{\SC{K'},0}\lf(1+ \EE_x(V^r(X_{n-1}))\ri) \leq \mfk{c}_{\SC{K'},0}\lf(1+\cnst_r(x)\ri) \equiv \const_{\SC{K}'}(x).
\end{align*}

Hence
\begin{align}
\non
\sup_n|\tk^n f\indic{\SC{K}}(x_0) - \tk^n h_m|  \leq&\ \int_{\SC{K}'} |f(y)\indic{\SC{K}}(y) - h_m(y)| \tkd^n(x_0,y) d\mu(y)\\
 \leq &\ \const_{\SC{K}'}(x_0) \|f1_{\SC{K}} - h_m\|_1.
\label{est-fK}
\end{align}
Next, notice that $\pi$ is absolutely continuous with $\mu$. Indeed, if $\mu(A) = 0$, then $\tk(x,A)=0$, and hence $\pi(A) = \int \pi(dx) \tk(x,A) =0$.
Let $g = d\pi/d\mu$. For any $M>0$,
\begin{align}
\non
|\pi(h_m) - \pi(f1_{\SC{K}})| \leq &\  M \int |h_m-f\indic{\SC{K}}|\indic{\{g\leq M\}} d\mu+ \int |h_m-f\indic{\SC{K}}|g\indic{\{g\geq M\}} d\mu\\
\label{est-pifK}
\leq & M\|h_m - f\indic{\SC{K}}\|_1+ 2\sup_{x \in \SC{K}}|f(x)| \int g \indic{\{g\geq M\}}d\mu.
\end{align}
Write
\begin{align}
\non
\tk^n f(x_0)  - \pi(f) =&\ \lf(\tk^n f1_{\SC{K}}(x_0)-\tk^n h_m(x_0)\ri) + \lf(\tk^nh_m(x_0) - \pi(h_m) \ri)+ \lf(\pi(h_m) - \pi(f1_{\SC{K}}(x_0))\ri)\\
\label{eq:split-pi}
& \ +\tk^n f1_{\SC{K}^c}(x_0)-\pi(f1_{\SC{K}^c}(x_0)),
\end{align}
and choose $\SC{K}$ such that \eqref{eq:Kout} holds for $\tilde \vep$ where $\tilde \vep$ is chosen such that $\cnst_{r'}^{r/r'}(x_0) \tilde\vep^{1 - r/r'} + \tilde\vep \leq \vep/10$. 
Since $\int gd\mu = 1$, choose sufficiently large $M$ such that $\int g \indic{\{g\geq M\}}d\mu \leq \vep / (20\sup_{x \in \SC{K}}|f(x)|)$, then a sufficiently large $m$ such that
$$\|f1_{\SC{K}} - h_m\|_1 \leq (\vep/5 \const_{\SC{K}'}(x_0))\wedge (\vep/10M).$$
Finally, since $\tk^n(x_0,\cdot) \RT \pi$, and $h_m \in C_c(\metsp,\R)$,    we have $ \lf(\tk^nh_m(x_0) - \pi(h_m) \ri) \rt 0$ as $n\rt \infty$. Hence, we can choose a sufficiently large $n$ such that $| \tk^nh_m(x_0) - \pi(h_m)| \leq \vep/5$, and thus from  \eqref{eq:Kout}, \eqref{est-fK}, \eqref{est-pifK}
and \eqref{eq:split-pi},
\begin{align*}
|\tk^n f(x_0)  - \pi(f)| \leq & \vep.
\end{align*}

\np
This proves the claim, which in particular says that for any $x_0 \in \metsp$ and any Borel set $A$, $\tk^n(x,A) \stackrel{n\rt \infty}\rt \pi(A)$.  
%
%
By \cite[Theorem 4.3.4]{HL03} (also see \cite{HL01}), $\{X_n\}$ is aperiodic and PHR, and by the same result this implies $\|\tk^n(x,\cdot) - \pi\|_{TV} \rt 0.$ The equivalence of the setwise convergence of $\tk^n(x,\cdot)$ and convergence in total-variation norm is a unique feature of PHR chains. Now note that by H\"older's inequality for some $r' \in (r, \pow(s,p))$
\begin{align*}
\int (V^r(y) +1)d|\tk^n(x,\cdot)- \pi|(y) \leq&\ \lf(\int V^{r'}(y)(\tk^n(x,dy)+ \pi(dy))\ri)^{r/r'}\|\tk^n(x,\cdot)- \pi\|_{TV}^{1-r/r'}\\
& \hs{.4cm}+\|\tk^n(x,\cdot)- \pi\|_{TV}\\
\leq & \ 2\cnst_{r'}(x)^{r/r'}\|\tk^n(x,\cdot)- \pi\|_{TV}^{1-r/r'}+\|\tk^n(x,\cdot)- \pi\|_{TV} \stackrel{n\rt \infty}\rt 0.
\end{align*}
The equivalence of \eqref{eq:conv-1} and \eqref{eq:conv-2} follows from  Lemma \ref{lem:eq-g-norm} below.

\end{proof}

\begin{lemma}\label{lem:eq-g-norm}
Let $\nu$ be a signed measure on a complete separable metric space $\metsp$. Suppose that $g:\metsp \rt [0,\infty)$ is a measurable function such that $|\nu |(g) = \int gd|\nu| < \infty$. Then 
$$ \f{1}{2} |\nu |(g) \leq\|\nu\|_{g} \leq |\nu |(g),$$
where recall $\|\nu\|_{g}  = \sup_{f:|f| \leq g} |\nu(f)|$
\end{lemma}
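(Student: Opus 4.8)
The plan is to prove the two inequalities separately: the right-hand one is immediate from the elementary estimate relating an integral against a signed measure to an integral against its total variation, and the left-hand one will come from testing the supremum defining $\|\nu\|_g$ against the two indicator-weighted truncations of $g$ supplied by a Hahn decomposition of $\nu$.

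For the upper bound $\|\nu\|_g \le |\nu|(g)$ I would take any $f$ admissible in the supremum (so $|f|\le g$ pointwise, which in particular covers the case $0\le f\le g$) and use
$$|\nu(f)| = \Big|\int f\, d\nu\Big| \le \int |f|\, d|\nu| \le \int g\, d|\nu| = |\nu|(g),$$
then pass to the supremum over admissible $f$. Everything here is finite since $|\nu|(g)<\infty$ by hypothesis.

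For the lower bound I would invoke a Hahn decomposition $\metsp = P \sqcup N$ for $\nu$, with $P,N$ disjoint and measurable and $\nu(\,\cdot\cap P)=\nu^+$, $-\nu(\,\cdot\cap N)=\nu^-$ in the Jordan decomposition $\nu=\nu^+-\nu^-$. Take the two test functions $f_1=g\,\indic{P}$ and $f_2=g\,\indic{N}$; both satisfy $0\le f_i\le g$ and are therefore admissible. Since $\nu^+$ is concentrated on $P$ and $\nu^-$ on $N$,
$$\nu(f_1)=\int_P g\, d\nu = \nu^+(g), \qquad \nu(f_2)=\int_N g\, d\nu = -\nu^-(g),$$
whence $\|\nu\|_g \ge \max\{\nu^+(g),\nu^-(g)\} \ge \tfrac12\big(\nu^+(g)+\nu^-(g)\big) = \tfrac12|\nu|(g)$.

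Both steps are routine, so there is no real obstacle; the only points requiring care are (i) the Hahn decomposition, which exists for any signed measure and causes no difficulty in the application, where $\nu=\tk^n(\inpo,\cdot)-\pi$ has finite total variation, and (ii) keeping track of which sign convention is in force for the $g$-norm: if the admissible class is $\{f:|f|\le g\}$ one actually obtains the equality $\|\nu\|_g=|\nu|(g)$ by testing against $g(\indic{P}-\indic{N})$, while the factor $\tfrac12$ is genuinely needed, and sharp, for the class $\{f:0\le f\le g\}$ (take $\nu$ with $\nu^+(g)=\nu^-(g)$). Either way the stated two-sided bound follows, and the equivalence of \eqref{eq:conv-1} and \eqref{eq:conv-2} is immediate from it with $g=V^r+1$.
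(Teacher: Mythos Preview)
Your proof is correct and follows essentially the same route as the paper: the upper bound via $|\nu(f)|\le|\nu|(|f|)\le|\nu|(g)$, and the lower bound by testing against $g\indic{P}$ and $g\indic{N}$ from a Hahn decomposition to obtain $\|\nu\|_g\ge\nu^+(g)$ and $\|\nu\|_g\ge\nu^-(g)$ separately, then averaging. Your closing remark on the two sign conventions for the admissible class is a useful clarification, since the paper's introduction defines the $g$-norm over $\{f:0\le f\le g\}$ while the lemma statement recalls it over $\{f:|f|\le g\}$; your argument covers both.
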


\begin{proof}
The last inequality is trivial as for any measurable $f$ with $|f| \leq g$, $|\nu(f)| \leq |\nu|(|f|) \leq |\nu |(g)$. For the first inequality, let $\metsp = \SC{Y} \cup \SC{N}$ be the Hahn decomposition for $\nu$ (in particular, $\SC{Y} \cap \SC{N} =\emptyset$) , with the corresponding Jordan decomposition $\nu = \nu^+-\nu^-$ (i.e., supp$(\nu^+) \subset \SC{Y}$ and supp$(\nu^-) \subset \SC{N})$.
Choose $f = g\indic{\SC{Y}}$. Then
 \begin{align*}
 \|\nu\|_{g} \geq |\nu(g\indic{\SC{Y}})| =| \nu^+(g\indic{\SC{Y}}) -  \nu^-(g\indic{\SC{Y}})| =  \nu^+(g\indic{\SC{Y}}) = \nu^+(g), 
 \end{align*}
 where the last equality is because supp$(\nu^+) \subset \SC{Y}$. Similarly, choosing  $f = g\indic{\SC{N}}$, we have $ \|\nu\|_{g} \geq \nu^-(g) $, whence it follows that $ 2\|\nu\|_{g} \geq |\nu|(g).$

\end{proof}

\section{Applications}\label{sec-app}
		
This sections is devoted to understanding stability of a broad class of  multiplicative systems through application of the previous theorems. 
 					
			\subsection{Discrete time switching systems}
			Let $\H$ be a Hilbert space and $\metsp$  a Polish space. 
		Suppose there exists a sequence of measurable maps $P_n:\H \times \metsp \times \metsp\rt[0, 1]$ such that for each $x\in \H$, the function $P_n(x, \cdot, \cdot)$ is a transition probability kernel.	Consider a discrete-time $\SC{F}_n$-adapted process $\{Z_n\} \equiv \{(X_n, Y_n)\}$ taking values in $\H\times \metsp$, whose dynamics is defined by the following rule: given the state $(X_n, Y_n) = (x_n, y_n)$, 
			\begin{enumerate}[label={\rm (SS-\arabic*)}, leftmargin=*, align=left, start=1]
				\item \label{hyb-1} first, $Y_{n+1}$ is selected randomly according to the (possibly) time-inhomogenous  transition probability distribution $P_n(x_n, y_n, \cdot) \Let P_{n, x_n}(y_n, \cdot)$, 
				\item \label{hyb-2} next given $Y_{n+1} = y_{n+1}$,
				   $$X_{n+1} = H_n(x_n, y_{n+1}, \xi_{n+1}),$$
				   where  $\{\xi_k:k=1,\hdots\}$ is a sequence of independent random variables taking values in a Banach space $\B$,  $\xi_{n+1}$ is independent of $\s\{\SC{F}_n, Y_{n+1}\}$ and $H_n: \H\times \metsp\times \B \rt \H$.
				 \end{enumerate}
In general $\{(X_n,Y_n)\}$ is a (possibly) time-inhomogeneous Markov process but clearly, neither $\{X_n\}$ nor $\{Y_n\}$ is Markovian on its own. 	The stochastic system $\{(X_{n},Y_n)\}$ is known as a {\em discrete-time switching system} or a {\em stochastic hybrid system} (and sometimes also known as 	\emph{iterated function system with place dependent probabilities} \cite{ref:BarDemEltGer-88}). Stochastic hybrid systems are extensively used to model practical phenomena where system parameters are subject to sudden changes. These systems have found widespread applications in various disciplines including synthesis of fractals, modeling of biological networks,  \cite{ref:LasMac-94}, target tracking \cite{MM90}, communication networks \cite{JPH05}, control theory \cite{ref:ChaPal-11, ref:ChaCinLyg-11, ref:CosFraMar-05} - to name a few.  There is a considerable literature addressing classical weak stability questions concerning the existence and uniqueness of invariant measures of iterated function systems, see e.g., \cite{ref:Pei-93, ref:LasYor-94, ref:Sza-03, ref:DiaFre-99, ref:JarTwe-01} and the references therein. Comprehensive sources studying various properties of these systems including results on stability  in both continuous and discrete time can be found in \cite{MaYu06, YiZh10} (also see the references therein). In most of these works, $Y_n$ is often assumed to be a stand-alone finite or countable state-space Markov chains.

%
%
	
\np		
We consider a broad class of coupled switching or hybrid systems whose dynamics is described by \ref{hyb-1} and \ref{hyb-2} with $H_n$  of the form
$$H_n(x,y,z) =  L_n(x,y)+ F_n(x,y) + G_n(x,y,z),$$
where $L_n, F_n:\H \times \metsp \rt \H$ and $G_n:\H \times \metsp\times \B \rt \H$. 
In other words, $\{X_n\}$ satisfies
\begin{align}
	\label{eq:ss-2}
X_{n+1} = L_n(X_n, Y_{n+1})+ F_n(X_n, Y_{n+1}) + G_n(X_n,Y_{n+1},\xi_{n+1})
\end{align}
where the $\xi_n$ are $\B$-valued random variables.  \eqref{eq:ss-2}, for example, 
 includes multiplicative systems of the form
 \begin{align*}
X_{n+1} = X_n+ F_n(X_n, Y_{n+1}) + G^0_n(X_n,Y_{n+1})\xi_{n+1}.	
\end{align*}
 
  We will make the following assumptions on the above system.

\begin{assumption} \label{assum-SS2}	

\ \

			\begin{enumerate}[label={\rm (SS-\arabic*)}, leftmargin=*, align=left, start=7]
				
				\item \label{H-SS-drift} For $\|x\| >B$, and any $y \in \metsp$,
				$$ P_{n,x}\<F_n(x,\cdot), L_n(x,\cdot)\>(y) = \int \<F_n(x,y'), L_n(x,y')\> P_{n,x}(y,dy') \leq - \mfk{m}_0\|x\|^{-(1+\g)}, $$
			 for some constants $\mfk{m}_0$ and exponent $\g\geq 0$.  \vs{.1cm}
				
			\item \label{H-SS-FG} The following growth conditions hold:
			\begin{itemize}
			\item $ \|L_n(x,y)\| \leq \mfk{m}_{L,1}(y)\|x\|+\mfk{m}_{L,2}(y)$
			 and $\dst \|\bar L_n(x,y)\| \leq \mfk{m}_{\bar L}(y)(1+\|x\|)^{l_1},$  where\\
		$	\bar L_n(x,y) = L_n(x,y) - P_{n,x}L_n(x,\cdot)(y).$
		\vs{.1cm}
			
			 \item $\dst \|F_n(x,y)\| \leq \mfk{m}_F(y)(1+\|x\|)^{f_0},$ $\bar F_n(x,y) \leq \mfk{m}_{\bar F}(y)(1+\|x\|)^{f_1}$,  \\$ \|G_n(x,y,z)\| \leq \mfk{m}_G(y)(1+\|x\|)^{g_0}\Psi(z),$  where $\Psi:\B \rt [0,\infty)$ and
		$	\bar F_n(x,y) = F_n(x,y) - P_{n,x}F_n(x,\cdot)(y).$
		\vs{.1cm}
			
			\item  For any $p>0$, the constants $\bar{\mfk{m}}_{F,p}, \bar{\mfk{m}}_{\bar F,p}, \bar{\mfk{m}}_{G,p}, \bar{\mfk{m}}_{L,1,p}, \bar{\mfk{m}}_{L,2,p}$ and $\bar{\mfk{m}}_{\bar L,p}$ are finite, and $\bar{\mfk{m}}_{L,1,2} \leq 1$ ,where the above constants are defined as
			\begin{align}\label{eq:bar-const}
			\bar{\mfk{m}}_{\chi,p} \doteq \sup_{n,x,z} \int  \mfk{m}^p_\chi(y)P_{n,x}(z,dy), \quad \chi=F, \bar F, G, \{L,1\},  \{L,2\}, \bar L.
			\end{align}
			
			  \end{itemize}
			  
			  \item \label{H-SS-exp} The exponents satisfy:
			  \begin{itemize}
			  \item (a) 	  $f_0 < (1+\g)/2$, or (b)\ $f_0 = (1+\g)/2$ and $\bar{\mfk{m}}_{F,2} \leq 2\mfk{m}_0$;
			  \item  $\ g_0<\g\wedge 1/2$, and $l_1\vee f_1 <1/2$. 	
			  \end{itemize}		  
			 \item \label{H-SS-noise} The $\xi_n$ are independent $\B$-valued random variables with distribution $\nu_{n}$; for each $n$, $\xi_{n+1}$ is independent of $\s\{\SC{F}_n, Y_{n+1}\}$, and for any $p>0$, $m_*^p = \sup_n\EE(\Psi(\xi_n)^p) < \infty$
			\end{enumerate}
\end{assumption}				
			
\begin{proposition}\label{prop:switch}
Under Assumption \ref{assum-SS2}, $\sup_n \EE_{x_0}\|X_n\|^m < \infty.$ for any $m>0$ and $x_0 \in \H$. If the functions $G_n$ are centered with respect to the variable $z$ in the sense that $\dst \hat G_n(x,y) \doteq \ \int_\B G_n(x,y,z) \nu_{n+1}(dz) = 0$ for all $n\geq 1$,  $x \in \H$ and $y\in \metsp$, then we only need $g_0<1/2$ instead of $\ g_0<\g\wedge 1/2$ in \ref{H-SS-exp}   for the above assertion  to be true.
\end{proposition}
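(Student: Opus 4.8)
The strategy is to verify the hypotheses of Theorem \ref{t:maintheorem} for the process $\{Z_n\} = \{(X_n, Y_n)\}$ with the Lyapunov function $V(x,y) = \|x\|$ (or a smooth surrogate like $(1+\|x\|^2)^{1/2}$ to avoid differentiability issues at the origin, though for the moment estimates $\|x\|$ suffices). The filtration is $\{\sigalg_n\}$, and the exceptional set is $\SC{D} = \{(x,y): \|x\| \leq B\}$, which is handled by \ref{H-SS-drift}. Once \ref{t:maintheorem:driftbnd}, \ref{t:maintheorem:jumpbnd}, and \ref{t:maintheorem:bnds} are established for a suitable exponent $p$ and growth exponent $s < p/2-1$ with $\pow(s,p) \geq m$, the conclusion $\sup_n \EE_{x_0}\|X_n\|^m < \infty$ will follow; since $m$ is arbitrary and $p$ can be taken as large as we like (all the constants $\bar{\mfk{m}}_{\chi,p}$ and $m_*^p$ are finite for every $p$ by \ref{H-SS-FG} and \ref{H-SS-noise}), it suffices to make $\pow(s,p)$ large, which for $p \geq 4$ means making $p(1-s/(p-2)) - 1$ large.

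\textbf{Step 1: the drift estimate.} Expand $\|X_{n+1}\|^2 = \|L_n + F_n + G_n\|^2$ (arguments $(X_n, Y_{n+1})$ and $(X_n,Y_{n+1},\xi_{n+1})$ suppressed) and take conditional expectation given $\sigalg_n$. The cross term $\langle F_n, L_n\rangle$, after integrating first over $\xi_{n+1}$ (which affects only $G_n$) and then over $Y_{n+1}$ via $P_{n,X_n}$, produces $\leq -\mfk{m}_0\|X_n\|^{-(1+\gamma)}$ by \ref{H-SS-drift}. The remaining terms $\|L_n\|^2, \|F_n\|^2, \|G_n\|^2, \langle L_n,G_n\rangle, \langle F_n,G_n\rangle$ are controlled by the growth bounds in \ref{H-SS-FG}: using $\bar{\mfk{m}}_{L,1,2}\leq 1$ and $l_1,f_1,f_0,g_0$ all strictly below the thresholds in \ref{H-SS-exp}, one gets $\EE[\|X_{n+1}\|^2 - \|X_n\|^2 \mid \sigalg_n] \leq -\mfk{m}_0\|X_n\|^{-(1+\gamma)} + (\text{lower order in }\|X_n\|)$ on $\{\|X_n\| > B\}$, which for $\|X_n\|$ large is dominated by a negative constant after one passes from $\|x\|^2$ back to $\|x\|$ (note $\EE[\|X_{n+1}\| - \|X_n\| \mid \sigalg_n] \leq \EE[\|X_{n+1}\|^2 - \|X_n\|^2 \mid \sigalg_n]/(2\|X_n\|)$ minus a variance correction; one must be slightly careful, but the exponent conditions $f_0 \leq (1+\gamma)/2$, $g_0 < \gamma \wedge 1/2$ are exactly calibrated so that the cross term beats everything, and the borderline case $f_0 = (1+\gamma)/2$ uses $\bar{\mfk{m}}_{F,2} \leq 2\mfk{m}_0$). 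This gives \ref{t:maintheorem:driftbnd} with some $A > 0$ by enlarging $B$ if necessary.

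\textbf{Step 2: the centered jump estimate.} Here $\jdiff_n = \EE[\,|\,\|X_{n+1}\| - \EE(\|X_{n+1}\|\mid\sigalg_n)\,|^p \mid \sigalg_n\,]$. By the triangle inequality $|\,\|X_{n+1}\| - \EE\|X_{n+1}\|\,| \leq \|X_{n+1} - \EE X_{n+1}\| + $ oscillation terms; more cleanly, write $X_{n+1} - \EE(X_{n+1}\mid\sigalg_n) = \bar L_n + \bar F_n + (G_n - \hat G_n) + (\hat G_n - \EE(\hat G_n(X_n,Y_{n+1})\mid\sigalg_n)) + \cdots$, i.e. decompose each of $L_n, F_n, G_n$ into its $P_{n,X_n}$-centered part (the bars) plus the $Y$-average, and further center the $Y$-averages. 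Each centered piece is bounded, in $L^p$ conditionally, by $\mfk{m}_{\bar L}(Y_{n+1})(1+\|X_n\|)^{l_1}$, $\mfk{m}_{\bar F}(Y_{n+1})(1+\|X_n\|)^{f_1}$, and $\mfk{m}_G(Y_{n+1})(1+\|X_n\|)^{g_0}\Psi(\xi_{n+1})$ respectively; integrating out $Y_{n+1}$ and $\xi_{n+1}$ with \eqref{eq:bar-const} and \ref{H-SS-noise} gives $\jdiff_n \leq \const(1 + \|X_n\|^{p(l_1\vee f_1\vee g_0)})$. So \ref{t:maintheorem:jumpbnd} holds with $s = p\cdot(l_1\vee f_1\vee g_0)$; since $l_1, f_1, g_0 < 1/2$, we have $s < p/2$, and by choosing $p$ large enough we can make $s < p/2 - 1$ and indeed $\pow(s,p) = p(1 - s/(p-2)) - 1 \to \infty$ (as $s/p \to (l_1\vee f_1\vee g_0) < 1/2$ fixed, $p(1-s/(p-2)) \sim p(1 - 2(l_1\vee f_1\vee g_0)) \to \infty$), so $m$ is eventually dominated. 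The \emph{centered} feature of Theorem \ref{t:maintheorem} is essential here: the raw jump $\|X_{n+1} - X_n\|$ includes the term $L_n(X_n,Y_{n+1}) - X_n$ which in the multiplicative case can be of order $\|X_n\|$, violating the $O(\|X_n\|^s)$ bound — but after centering, only $\bar L_n$ of order $\|X_n\|^{l_1}$ survives. For the stated improvement when $\hat G_n \equiv 0$: then $G_n$ is already $P$-centered-in-$z$ and the condition $g_0 < \gamma \wedge 1/2$ can be relaxed to $g_0 < 1/2$ because the $G_n$ contribution to the drift cross-term vanishes after integrating over $\xi_{n+1}$ (the term $\langle F_n, \hat G_n\rangle = 0$ and $\langle L_n, \hat G_n\rangle = 0$), so $g_0$ need only be small enough for the jump bound, not for the drift.

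\textbf{Step 3: the boundary condition and conclusion.} On $\SC{D} = \{\|x\|\leq B\}$ we trivially have $\sup_{\SC{D}} V = B < \infty$, and $\EE[V(Z_{n+1})^p \mid \sigalg_n]\indic{\{Z_n\in\SC{D}\}} = \EE[\|X_{n+1}\|^p \mid \sigalg_n]\indic{\{\|X_n\|\leq B\}} \leq \const(1 + B^{p\max(1,f_0,g_0,l_1)}) < \infty$ using the growth bounds and $m_*^p < \infty$ — this is uniform in $n$, giving \ref{t:maintheorem:bnds} (in fact the simpler sufficient form noted in the Remark holds). Theorem \ref{t:maintheorem} then yields $\sup_n \EE_{x_0}\|X_n\|^r < \infty$ for all $r < \pow(s,p)$; taking $p$ large makes $\pow(s,p) > m$, proving the claim. \textbf{The main obstacle} is Step 1: carefully extracting a genuinely negative drift for $\|x\|$ (not $\|x\|^2$) out of the $\|x\|^2$-expansion, since the favorable cross term $-\mfk{m}_0\|x\|^{-(1+\gamma)}$ decays in $\|x\|$ and must be shown to still dominate the Itô-type correction $-\tfrac{1}{\|x\|}\cdot\frac{1}{2}(\text{conditional variance of }\|X_{n+1}\|)$ plus the contributions of $\|F_n\|^2 \sim \|x\|^{2f_0}$ and $\|G_n\|^2 \sim \|x\|^{2g_0}$; this is precisely where the exponent inequalities in \ref{H-SS-exp} and the borderline constant conditions enter, and it requires bookkeeping all the $\bar{\mfk{m}}_{\chi,p}$ constants against each other.
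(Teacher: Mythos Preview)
Your proposal follows essentially the same route as the paper's proof: verify Assumption~\ref{cond_stability2} for $V(x,y)=\|x\|$ by expanding $\|X_{n+1}\|^2$, extracting the negative drift for $\|X_{n+1}\|$ via Jensen and the identity $\sqrt{a}-\sqrt{b}=(a-b)/(\sqrt{a}+\sqrt{b})$, and bounding $\jdiff_n$ through the centered pieces $\bar L_n,\bar F_n,\bar G_n$ to get $s=p(l_1\vee f_1\vee g_0)<p/2-1$ for $p$ large; then invoke Theorem~\ref{t:maintheorem} and let $p\to\infty$.

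One point of confusion to flag: you have taken the bound in \ref{H-SS-drift} literally as $-\mfk{m}_0\|x\|^{-(1+\gamma)}$ and concluded that the favorable cross term \emph{decays} in $\|x\|$. That cannot work --- a decaying negative term can never dominate the growing positive contributions $\|F_n\|^2\sim\|x\|^{2f_0}$ and $\|G_n\|^2\sim\|x\|^{2g_0}$ (not to mention the linear term from $\|L_n\|^2$), so there would be no negative drift at all. The exponent is a typo in the assumption; the proof in the paper, Remark~\ref{rem-swsys}, and Corollary~\ref{cor-erg-mark} all use $\langle F_n(x,y),L_n(x,y)\rangle\le -\mfk{m}_0\|x\|^{1+\gamma}$ with \emph{positive} exponent, so that after taking conditional expectation the cross term contributes $-2\mfk{m}_0\|X_n\|^{1+\gamma}$ to $\EE[\|X_{n+1}\|^2\mid\sigalg_n]$. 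With that correction your Step~1 heuristics become correct: $\delta_0\doteq 2(f_0\vee g_0)\vee(f_0+g_0)\vee(1+g_0)<1+\gamma$ under \ref{H-SS-exp}, the $\|x\|^{1+\gamma}$ term dominates, and after dividing by $\sqrt{\EE[\|X_{n+1}\|^2\mid\sigalg_n]}+\|X_n\|$ (which is of order $\|X_n\|$) you obtain a drift of order $-\|X_n\|^{\gamma}$, hence $\le -A$ on $\{\|X_n\|>C\}$ for suitable $C$. Your ``$/(2\|X_n\|)$ minus a variance correction'' is a less clean surrogate for this square-root trick; the paper's version avoids having to track any correction term.
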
	
			
\np			
\begin{remark}\label{rem-swsys}{\rm
A few comments are in order.
\begin{itemize} 
\item Because of the growth assumption on $G_n$ in \ref{H-SS-FG} and the condition \ref{H-SS-noise}, for each $n,x$ and $y$, the function $z \rt G_n(x,y,z)$ is Bochner integrable, and hence  $\hat G_n(x,y) \doteq \int_\B G_n(x,y,z) \nu_{n+1}(dz)$ is well defined (the integral is defined in Bochner sense).\vs{.15cm} 

\item One scenario where the functions $G_n$  are centered (with respect to the variable $z$) occurs when considering multiplicative stochastic system driven by zero-mean random variables. Specifically, in such models  the $G_n$ are of the form 
$G_n(x,y,z) = G^0_n(x,y)z$ and the $\xi_n$ are mean zero-random variables. Also notice for these models, $\Psi(z) = \|z\|_\B.$\vs{.15cm} 

\item Suppose that the $G_n$ are not centered in the variable $z$. If $\g<1/2$, \ref{H-SS-exp} requires that the growth exponent of $G_n$, $g_0<\g$. However, this could be extended to the boundary case of $g_0 = \g$ (when $\g<1/2$) provided the averaged growth constants $\bar{\mfk{m}}_{\chi,p}$ (c.f. \eqref{eq:bar-const})  meet certain conditions. If $g_0=\g$ and $f_0<(1+\g)/2$, then the assertion of Proposition \ref{prop:switch} is true provided $\lf(\bar{\mfk{m}}_{G,2}m^2_*\ri)^{1/2}<\mfk{m}_0$. If $g_0=\g$ and $f_0 = (1+\g)/2$, then the same assertion  holds provided $\lf(\bar{\mfk{m}}_{G,2}m^2_*\ri)^{1/2}+ \bar{\mfk{m}}_{F,2}/2<\mfk{m}_0$. \vs{.15cm} 

\item Condition	\ref{H-SS-drift} is implied by the simpler condition: 
	$$\<F_n(x,y),L_n(x,y)\> \leq - \mfk{m}_0\|x\|^{1+\g}, \quad \|x\|>B, \ \forall y.$$
Similarly, for many models a stronger (but easier to check) form of the condition \ref{H-SS-FG} , where the `constants' $\mfk{m}_{\chi}$ (for $ \chi=F, \bar F, G, \{L,1\},  \{L,2\}, \bar L$) do not depend on $y$, suffices. In that case the corresponding averaged constants (given by \eqref{eq:bar-const}) are of course given by $\bar{\mfk{m}}_{\chi,p} = \mfk{m}_{\chi}^p$, and are therefore trivially finite.   \vs{0.15cm}

\item One common example of $L_n$ is $L_n(x,y) \equiv L_n(x) = x$ or $U_nx$ for some unitary operator $U_n$. If $L_n(x,y) \equiv L_n(x)$, then centered $L_n$, that is, $\bar L_n \equiv 0$, and the condition on the corresponding growth exponent $l_1$ is trivially satisfied.\vs{.15cm}

 \item Clearly, $f_1\leq f_0$, where recall that $f_1$ and $f_0$ are the growth rates of $\bar F_n(x,y) = F_n(x,y)-P_xF_n(x,\cdot)(y)$ (centered $F_n$) and $F_n$, respectively. In some models, without any other information or suitable estimates on $\bar F$,  $f_1$ may just have to be taken the same as $f_0$,  in which case condition \ref{H-SS-exp} implies that the above result on uniform bounds on moments applies to  systems for which $f_0 <1/2.$ (and not $(1+\g)/2$). However, in some other models the optimal growth rate $f_1$ of $\bar F_n$ can indeed be lower than that of $F_n$. For example, as we noted before for the function $L_n$, if $F_n(x,y) \equiv F_n(x)$,  then  $\bar F_n(x,y) \equiv 0$ (that is, in particular, $f_1=0$), and this along with Theorem \ref{MP-stab} leads to Corollary \ref{cor-erg-mark} about Harris ergodicty of a large class of multiplicative Markovian systems. \vs{.15cm}


\end{itemize}	
}
\end{remark}

\begin{proof}[Proof of Proposition \ref{prop:switch}]	
	 Besides the different parameters in Assumption \ref{assum-SS2}, other  constants appearing in various estimates below will be denoted by $\mfk{m}_i$'s. They will not depend on $n$ but may depend on the parameters of the system.
	 
		For the proof we will only consider the case of \ref{H-SS-exp}-(a), where $f_0 < (1+\g)/2$; the proofs in the cases of \ref{H-SS-exp}-(b) and the second point in Remark \ref{rem-swsys} follow from \eqref{eq:xn2} and some minor modification of the arguments. For each $n$, define the functions $\hat G_n:\H\times \metsp\rt \H$ and $\tilde G, \ \bar G_n:\H\times \metsp\times\B \rt \H$  by
\begin{align*}
&\ \hat G_n(x,y) =\ \int_\B G_n(x,y,z) \nu_{n+1}(dz), \quad \tilde G_n(x,y,z) = G_n(x,y,z) - \hat G_n(x,y), \quad \text{ and}\\
&\ \bar G_n(x,y,z) =\ G_n(x,y,z) - P_{n,x}\hat G_n(x,\cdot)(y) = G_n(x,y,z) - \EE(G(X_n,Y_{n+1}, \xi_{n+1})|(X_n,Y_n)=(x,y))
\end{align*}
(recall that  $\nu_n$ is the distribution measure of $\xi_n$), and notice that by \ref{H-SS-FG} and \ref{H-SS-noise} for any $p>0$,
\begin{align}
\non
\EE\lf[|\hat G_n(X_n,Y_{n+1})|^p | \sigalg_n\ri] =&\ \int_{\metsp} \lf( \int_\B G_n(x,y,z) \nu_{n+1}(dz)\ri)^pP_{n, X_n}(Y_n,dy) \\
\non
\leq&\ \int_{\metsp} \int_\B \mfk{m}_G^p(y)(1+\|X_n\|)^{pg_0}\Psi(z)^{p} \nu_{n+1}(dz)P_{n, X_n}(Y_n,dy)  \\
\leq&\ \bar{\mfk{m}}_{\hat G,p}(1+\|X_n\|)^{pg_0},
\label{eq:Ghat-est}
\end{align}
where $\bar {\mfk{m}}_{\hat G,p} = \bar{\mfk{m}}_{G,p}m_*^p$ (recall $m_*^p=\sup_k\EE\lf[\Psi(\xi_k)^{p}\ri] < \infty$). It now easily follows that $\bar G_n$ and $\tilde G_n$ satisfy the following growth conditions:
\begin{align*}
\| \tilde G_n(x,y,z)\| \leq  \mfk{m}_{\tilde G}(y)(1+\|x\|)^{g_0}\Psi(z), \quad \text{and} \quad \| \bar G_n(x,y,z)\| \leq  \mfk{m}_{\bar G}(y)(1+\|x\|)^{g_0}\Psi(z)
\end{align*}
for some functions $\mfk{m}_{\bar G}(y)$ and  $\mfk{m}_{\tilde G}(y)$ (depending on $y$), where $\bar{\mfk{m}}_{\chi,p} < \infty$ for $\chi = \tilde G, \bar G$ (see \eqref{eq:bar-const} for definition of $\bar{\mfk{m}}_{\chi,p}$). Consequently, for any $p>0$
\begin{align*}
\EE\lf[\|\tilde G_n(X_n, Y_{n+1},\xi_{n+1})\|^p\big | \sigalg_n\ri] \leq&\ \bar{\mfk{m}}_{\tilde G,p}m_*^p(1+\|X_n\|)^{pg_0}, \\
 \EE\lf[\|\bar G_n(X_n, Y_{n+1},\xi_{n+1})\|^p\big | \sigalg_n\ri] \leq&\ \bar{\mfk{m}}_{\bar G,p}m_*^p(1+\|X_n\|)^{pg_0}.
\end{align*}
 Also, 
\begin{align}
\label{eq:LF-est}
\begin{aligned}
\EE\lf[\|L_n(X_n, Y_{n+1})\|^2\big | \sigalg_n\ri] \leq&\  \|X_n\|^2 + 2\bar{\mfk{m}}_{ L,2,2}^{1/2}\|X_n\|+\bar{\mfk{m}}_{ L,2,2} = \lf(\bar{\mfk{m}}_{ L,2,2}^{1/2}+\|X_n\|\ri)^{2}\\
\EE\lf[\|F_n(X_n, Y_{n+1})\|^2\big | \sigalg_n\ri] \leq&\ \bar{\mfk{m}}_{F,2}(1+\|X_n\|)^{2f_0}.
\end{aligned}
\end{align}
Now 	writing $G(X_n,Y_{n+1}, \xi_{n+1}) = \hat G_n(X_n,Y_{n+1})+\tilde G(X_n,Y_{n+1}, \xi_{n+1})$, we have
		\begin{align*}
		\|X_{n+1}\|^2 = & \ \|L_n(X_n,Y_{n+1})\|^2+ \|F_n(X_n,Y_{n+1})\|^2+  \|\hat G_n(X_n,Y_{n+1})\|^2 + \|\tilde G(X_n,Y_{n+1}, \xi_{n+1})\|^2 \\
		& \   + 2 \<L_n(X_n, Y_{n+1}),  F_n(X_n,Y_{n+1})\> 	+	2 \<(L_n+F_n+\hat G_n)(X_n, Y_{n+1}), \tilde G(X_n,Y_{n+1},\xi_{n+1})\>\\
		&+ 2 \<(L_n+F_n)(X_n, Y_{n+1}),  \hat G_n(X_n,Y_{n+1})\>.
	\end{align*}
Denoting the term $\<(L_n+F_n+\hat G_n)(X_n, Y_{n+1}), \tilde G(X_n,Y_{n+1},\xi_{n+1})\>$ by $J_{n+1}$ , we have 
\begin{align*}
\EE\lf[J_{n+1}| \sigalg_n\ri] =&\  \int_{\B}\int_{\metsp}\<(L_n+F_n+\hat G_n)(X_n, y), \tilde G(X_n,y,z)\> P_{n,X_n}(Y_n, dy)\nu_{n+1}(dz)\\
 =&\ \int_{\metsp}\lf\<(L_n+F_n+\hat G_n)(X_n, y), \int_{\B}\tilde G(X_n,y,z) \nu_{n+1}(dz)\ri\> P_{n,X_n}(Y_n, dy) =0.
\end{align*}
Also by Cauchy-Schwartz inequality, \eqref{eq:Ghat-est} and \eqref{eq:LF-est}
\begin{align*}
\EE\lf[|\<F_n(X_n, Y_{n+1}), \hat G_n(X_n, Y_{n+1})\>| \big | \sigalg_n\ri] \leq&\ \lf(\EE\lf[\|F_n(X_n, Y_{n+1})\|^2\big | \sigalg_n\ri]\ri)^{1/2}\lf(\EE\lf[\| \hat G_n(X_n, Y_{n+1})\|^2\big | \sigalg_n\ri]\ri)^{1/2}\\
\leq&\  \bar{\mfk{m}}_{F,2}^{1/2}\bar{\mfk{m}}_{\hat G,2}^{1/2}(1+\|X_n\|)^{f_0+g_0},
\end{align*}
and similarly, 
\begin{align*}
\EE\lf[|\<L_n(X_n, Y_{n+1}), \hat G_n(X_n, Y_{n+1})\>| \big | \sigalg_n\ri] \leq&\ \bar{\mfk{m}}_{\hat G,2}^{1/2}\lf(\bar{\mfk{m}}_{ L,2,2}^{1/2}\vee 1+\|X_n\|\ri)^{1+g_0}.
\end{align*}
Hence, on $\{\|X_n\|>B\}$
\begin{align}
	\non
\EE\lf[\|X_{n+1}\|^2 | \sigalg_n\ri] \leq &\ 
 \|X_n\|^2 +2\bar{\mfk{m}}_{ L,2,2}^{1/2}\|X_n\|+\bar{\mfk{m}}_{ L,2,2}+ \bar{\mfk{m}}_{F,2}(1+\|X_n\|)^{2f_0}+(\bar{\mfk{m}}_{\hat G,p}+\bar{\mfk{m}}_{\tilde G,p}m_*^p)(1+\|X_n\|)^{2g_0}\\
 &\  - 2\mfk{m}_0\|X_n\|^{1+\g} + 2\bar{\mfk{m}}_{F,2}^{1/2}\bar{\mfk{m}}_{\hat G,2}^{1/2}(1+\|X_n\|)^{f_0+g_0}+2\bar{\mfk{m}}_{\hat G,2}^{1/2}\lf(\bar{\mfk{m}}_{ L,2,2}^{1/2}\vee 1+\|X_n\|\ri)^{1+g_0}.
 \label{eq:xn2}
 \end{align} 
Since $\delta_0\doteq2(f_0 \vee g_0)\vee(f_0+g_0)\vee(1+g_0) < 1+\g,$ by \ref{H-SS-exp} it follows from the above inequality that we can choose $C>B$  large enough so that for $\|x_n\| >C$,
 \begin{align*}
 	\EE\lf[\|X_{n+1}\|^2 -\|X_n\|^2 \lf |\ri. \SC{F}_n\ri]
 \leq&\ \mfk{m}_1\lf(\|X_n\|^{\delta_0} - \|X_n\|^{1+\g}\ri) < 0.
\end{align*}

 Also
notice that choosing $C>B\vee1$ we have for $\|X_n\| >C$
\begin{align*}
\sqrt{\EE\lf[\|X_{n+1}\|^2 | \sigalg_n\ri]} + \|X_n\| \leq \mfk{m}_2 (1+\|X_n\|)^{1\vee\delta_0/2}  \leq 2^{1\vee\delta_0/2}\mfk{m}_2 \|X_n\|^{1\vee\delta_0/2}.
\end{align*}
Therefore for $\|X_n\| >C$,
\begin{align*}
\EE\lf[\|X_{n+1}\| | \sigalg_n\ri] - \|X_n\| \leq &\ \sqrt{\EE\lf[\|X_{n+1}\|^2 | \sigalg_n\ri]} - \|X_n\|
= \f{\EE\lf[\|X_{n+1}\|^2\lf |\ri. \SC{F}_n\ri] -\|X_n\|^2}{\sqrt{\EE\lf[\|X_{n+1}\|^2 | \sigalg_n\ri]} + \|X_n\|}\\
\leq &\  \mfk{m}_3\lf(\|X_n\|^{\delta_0-1\vee\delta_0/2} - \|X_n\|^{1+\g-1\vee\delta_0/2}\ri).
\end{align*}
Because of assumption \ref{H-SS-exp}, notice that 
$$
\|x\|^{\delta_0-1\vee\delta_0/2}-\|x\|^{1+\g-1\vee\delta_0/2} \stackrel{\|x\| \rt \infty}\Rt \begin{cases}
-\infty, & \quad \g>0\\
-\mfk{m}_3, & \quad \g=0.
\end{cases}
$$  
In either case, there exist a constant $A>0$, and a sufficiently large $C$, such that
\begin{align}\label{SS-neg-drft}
\EE\lf[\|X_{n+1}\| | \sigalg_n\ri] - \|X_n\| \leq & -A, \quad \text{ on } \|X_n\| >C.
\end{align}
Next, notice that 
\begin{align*}
\Big|\|X_{n+1}\| - \EE\lf[\|X_{n+1}\|\big | \sigalg_n\ri]\Big| \leq &\ \Big|\|X_{n+1}\|  - \|\EE[X_{n+1} \big| \sigalg_n]\|\Big| + \Big|\|\EE[X_{n+1} | \sigalg_n]\| - \EE\lf[\|X_{n+1}\| \big| \sigalg_n\ri]\Big|\\
\leq &  \|X_{n+1} - \EE\lf[X_{n+1}\big | \sigalg_n\ri]\|+ \Big|\EE\lf[\|X_{n+1}\| -\|\EE[X_{n+1} \big| \sigalg_n]\| \big| \sigalg_n \ri] \Big|\\
\leq &  \|X_{n+1} - \EE[X_{n+1} | \sigalg_n]\|+ \EE\lf[\|X_{n+1}-\EE[X_{n+1} | \sigalg_n]\| \big| \sigalg_n \ri].
\end{align*}
Hence,
\begin{equation}
	\label{SS-jump}
	\begin{aligned}
\jdiff_n = &\ \EE\lf[\lf|\|X_{n+1}\| - \EE\lf[\|X_{n+1}\|\big | \sigalg_n\ri]\ri|^p \Big| \sigalg_n\ri] \leq\ 2^p \EE\lf[\|X_{n+1} - \EE[X_{n+1} | \sigalg_n]\|^p\Big| \sigalg_n\ri]\\
=&\  2^p \EE\lf[\| \bar L(X_n, Y_{n+1})+\bar F(X_n, Y_{n+1})+\bar G(X_n,Y_{n+1}, \xi_{n+1})\|^p\Big| \sigalg_n\ri]\\
\leq&\  \mfk{m}_4 (1+\|X_n\|)^{p(l_1 \vee f_1 \vee g_0)}\equiv \phi_p(X_n),
\end{aligned}
\end{equation}
where $\phi_p(x) \doteq \mfk{m}_4 (1+\|x\|)^{p(l_1 \vee f_1 \vee g_0)}$. Since  $l_1 \vee f_1 \vee g_0<1/2$, for large enough $p$, we have $p(l_1 \vee f_1 \vee g_0) < p/2-1$. 
It now follows from Theorem \ref{t:maintheorem} (using $V(x) = \|x\|$) that for any $r \in (0, \pow(s=p(l_1 \vee f_1 \vee g_0),p))$, $\sup_n \EE\|X_n\|^{r} < \infty$. Since $p>0$ is arbitrarily large, the assertion follows. 

If $G_n(x,y,z)$ are centered, that is, if $\hat G_n\equiv 0$, then of course $\bar{\mfk{m}}_{\hat G,p}$ can be taken to be $0$ for all $p>0$, and from \eqref{eq:xn2} , $\delta_0=2(f_0\vee g_0)$. Consequently, we do not need $g_0<\g$ to have $\delta_0<1+\g$.


\end{proof}

\begin{corollary}\label{cor-erg-mark}
	Consider the class of $\{\sigalg_n\}$-adapted Markov processes taking values in $\R^d$, whose dynamics is defined by 
	\begin{align}\label{M-syst}
	X_{n+1} = L(X_n) + F(X_n) + G(X_n)\xi_{n+1},
	\end{align}
	where $F, L:\R^d \rt \R^d$, $G: \R^d \rt \M^{d\times d'}$ are continuous functions, and $d \leq d'$.
Assume that	

\begin{enumerate}[label={\rm (M-\arabic*)}, leftmargin=*, align=left, start=1]
	\item \label{M-LFG} $F$, $G$ and $L$ satisfy the growth conditions (a)\ $\|L(x)\| \leq \|x\|$ for $\|x\| >B$,  (b)\  $\dst \|F(x)\| \leq \mfk{m}_F(1+\|x\|)^{\g_0},$ and  (c)\ $ \|G(x)\| \leq \mfk{m}_G(1+\|x\|)^{g_0};$  
	
%

	\item \label{M-ND} for some constant $\mfk{m}_0, B$ and exponent $\g\geq 0$, 
	$$\<F(x),L( x)\> \leq - \mfk{m}_0\|x\|^{1+\g}, \quad \text{ for } \|x\| >B;$$
	
	\item \label{M-exp} the exponents satisfy: (a) 	  $\g_0 < (1+\g)/2$, or \ $\g_0 = (1+\g)/2$ and $\mfk{m}_{F} \leq \mfk{m}_0/2$; (b) $g_0 <1/2$;

	\item  the $\xi_{n}$ are i.i.d $\R^{d'}$-valued random variables with density $\rho $ with respect to Lebesgue measure, $\l_{\text{leb}}$; $\rho(z)>0$ for all $z \in \R^{d'}$, $\sup_{z\in \R^{d'}}\rho(z) < \infty$, and for each $p>0$, $m_*^p = \EE(\|\xi_1\|^p) < \infty$; 
	
	\item \label{cond-nondeg} 	 for some $\theta \geq 0$ and $\vep_0>0$, 
	$$u^TG(x)G(x)^Tu \geq  \vep_0 u^Tu / (1+\|x\|)^\theta, \quad \forall \ u, x \in \R^d.$$

 \end{enumerate}	

If in addition $\EE(\xi_1) = 0$, for all $n$,  then (a)\ $\{X_n\}$ is PHR and aperiodic with a unique invariant distribution $\pi$,  
	(b)\ $\sup_n \EE_{x_0}(\|X_n\|^r) \vee \EE_{\pi}\|X_n\|^r <\infty$, and (c)\ \eqref{eq:conv-1} or equivalently, \eqref{eq:conv-2} holds with $V(u) = \|u\|$,
	for any $x_0$ and $r >0$. If $\EE(\xi_1) \neq 0$, then the same assertion is true provided $g_0<\g \wedge 1/2$

\end{corollary}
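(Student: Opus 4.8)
The strategy is to verify the hypotheses of Theorem \ref{MP-stab} for the Markov chain \eqref{M-syst} with $V(u) = \|u\|$, treating \eqref{M-syst} as a special (homogeneous, state-only) case of the switching system \eqref{eq:ss-2} with trivial $Y$-component. First I would invoke Proposition \ref{prop:switch}: taking $L_n\equiv L$, $F_n\equiv F$, $G_n(x,z)=G(x)z$ (so $\Psi(z)=\|z\|$, $\mfk{m}_G(y)\equiv\mfk{m}_G$), and noting that $L_n(x,y)\equiv L_n(x)$ gives $\bar L_n\equiv 0$ and likewise $F_n(x,y)\equiv F_n(x)$ gives $\bar F_n\equiv 0$, hence $l_1=f_1=0<1/2$. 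Conditions \ref{M-LFG}, \ref{M-ND}, \ref{M-exp} translate directly into \ref{H-SS-drift}, \ref{H-SS-FG}, \ref{H-SS-exp}, and the i.i.d.\ moment assumption on $\xi_n$ gives \ref{H-SS-noise}; when $\EE(\xi_1)=0$ the $G$-term is centered in $z$, so only $g_0<1/2$ is needed, matching \ref{M-exp}(b). Proposition \ref{prop:switch} then yields $\sup_n\EE_{x_0}\|X_n\|^m<\infty$ for every $m>0$, which is part (b) for the $x_0$-initialized chain, and simultaneously verifies \ref{MP-stab-i} (via \eqref{SS-neg-drft}), \ref{MP-stab-ii} (via \eqref{SS-jump}, with $\jumpbnd(x)=\mfk{m}_4(1+\|x\|)^{pg_0}$ and $s=pg_0<p/2-1$ for $p$ large), and \ref{MP-stab-iii} (since $V=\|\cdot\|$ is bounded on the bounded set $\SC{D}=\{\|x\|\le C\}$ and $\tk V$ is bounded there by \eqref{eq:LF-est}-type estimates).

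Next I would verify \ref{MP-stab-iv}. Weak Feller follows from continuity of $L,F,G$ together with the fact that $\xi_1$ has a density: for $h\in C_b$, $\tk h(x)=\int h(L(x)+F(x)+G(x)z)\rho(z)\,dz$ is continuous by dominated convergence. For $\psi$-irreducibility and the density bound, use the nondegeneracy assumption \ref{cond-nondeg}: since $G(x)G(x)^T\succeq \vep_0(1+\|x\|)^{-\theta}I$ and $d\le d'$, the conditional law of $X_{n+1}$ given $X_n=x$ has, on the affine image, a density with respect to Lebesgue measure on $\R^d$ obtained by pushing $\rho$ through the surjective affine map $z\mapsto L(x)+F(x)+G(x)z$; because $\rho>0$ everywhere, this density is strictly positive, giving $\psi$-irreducibility with $\psi=\l_{\text{leb}}$. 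The quantitative bound $\sup_{y\in\SC{K}}\tkd(x,y)\le \mfk{c}_{\SC{K},0}(1+V^r(x))$ comes from the change-of-variables formula: the density of $X_{n+1}$ at $y$ is, up to the determinant of $(G(x)G(x)^T)$ raised to a suitable power, controlled by $\sup_z\rho(z)<\infty$ times $(1+\|x\|)^{\text{(power)}\cdot\theta}$; one checks this polynomial exponent in $\|x\|$ can be absorbed into $1+V^r(x)$ by choosing $r$ in the admissible range $(0,\pow(s,p))$ and $p$ large (here $s=pg_0$, so $\pow(s,p)\to\infty$ as $p\to\infty$, giving plenty of room).

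With all four hypotheses of Theorem \ref{MP-stab} in hand, part (i) of that theorem gives $\sup_n\EE_{x_0}(\|X_n\|^r)<\infty$ for $r<\pow(s,p)$ — hence for all $r>0$ after optimizing over $p$ — and part (ii) gives PHR, aperiodicity, the unique invariant $\pi$, and the convergence \eqref{eq:conv-1}/\eqref{eq:conv-2} with $V(u)=\|u\|$; this is exactly assertions (a) and (c). The remaining piece of (b), namely $\EE_\pi\|X_n\|^r=\pi(\|\cdot\|^r)<\infty$, follows from the lower-semicontinuity (indeed continuity) of $V$ and the generalized Fatou argument already run inside the proof of Theorem \ref{MP-stab}: $\pi(V^r)\le\liminf_n\tk^nV^r(x_0)\le\cnst_r(x_0)<\infty$. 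Finally, the case $\EE(\xi_1)\neq 0$ is handled identically except that the $G$-term is no longer $z$-centered, so Proposition \ref{prop:switch} requires $g_0<\g\wedge 1/2$ rather than $g_0<1/2$; all other steps are unchanged.

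I expect the main obstacle to be the density estimate in \ref{MP-stab-iv}: one must carefully track how the $(1+\|x\|)^{-\theta}$ lower bound on $G(x)G(x)^T$ propagates through the $n$-step kernel $\tkd^n$ — a single step gives a density on an affine subspace, and it takes the argument already sketched in the proof of Theorem \ref{MP-stab} (iterating $q^n(x,y)=\int q^{n-1}(x,z)q(z,y)\,d\mu(z)$ and using the one-step bound together with the just-established moment bound $\EE_x V^r(X_{n-1})\le\cnst_r(x)$) to get a bound uniform in $n$ and polynomial in $\|x\|$. One must also make sure the polynomial growth exponent produced by the determinant factor $\det(G(x)G(x)^T)^{-1/2}\lesssim(1+\|x\|)^{d\theta/2}$ is dominated by $1+V^r(x)$ for an admissible $r$; this is where the freedom to take $p$ (and hence $\pow(s,p)$) large is essential. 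The drift, jump, and Feller verifications are routine given Proposition \ref{prop:switch}.
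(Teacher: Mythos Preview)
Your approach is essentially the same as the paper's: reduce to Proposition \ref{prop:switch} by viewing \eqref{M-syst} as a degenerate switching system (trivial $Y$-component, so $\bar L_n=\bar F_n\equiv 0$), and then verify \ref{MP-stab-iv} via the one-step density formula, using \ref{cond-nondeg} to bound $\det(G(x)G(x)^T)^{-1/2}\lesssim (1+\|x\|)^{\theta d/2}$ and $\sup_z\rho(z)<\infty$ to control the density factor, with positivity of $\rho$ giving $\lambda_{\text{leb}}$-irreducibility. The only place you overthink is the ``main obstacle'': condition \ref{MP-stab-iv} asks only for a bound on the \emph{one-step} density $\tkd(x,y)$, not on $\tkd^n$; the iteration $q^n(x,y)=\int q^{n-1}(x,z)q(z,y)\,d\mu(z)\le \mfk{c}_{\SC{K}',0}(1+\cnst_r(x))$ is carried out inside the proof of Theorem \ref{MP-stab} itself and need not be redone here.
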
	

\begin{proof}
	Since $L,F$ and $G$ are continuous, it follows by the dominated convergence theorem that $\{X_n\}$ is weak-Feller.
From the assumption \ref{cond-nondeg}, it follows that
$GG^T$ is positive definite (in particular, non singular), and det$(G(x)G^T(x)) \geq \vep^d_0/(1+\|x\|)^{\theta d}$. Note that
$\tk(x,\cdot)$ admits a density $q(x,\cdot)$. Specifically,
$$q(x,y) = \f{1}{\sqrt{\text{det}(G(x)G^T(x))}}\rho\lf(G(x)^{-}_R(y-L(x) - H(x)\ri) \leq \sup_{z} \rho(z)(1+\|x\|)^{\theta d/2}/\vep^{d/2}_0,$$
where $G(x)^-_R = G^T(x)\lf(G(x)G(x)^T\ri)^{-1}$ is the Moore-Penrose pseudoinverse (in particular, right inverse) of $G(x)$.   Moreover, since $\rho(z) > 0$ a.s, 
for each $x$, $q(x,y) >0$ a.s in $y$ (with respect to $\l_{\text{leb}}$), and consequently, $X_n$ is $\l_{\text{leb}}$-irreducible. This shows that Condition \ref{MP-stab-iv} of Theorem \ref{MP-stab} holds.
The various assertions now follow from Theorem \ref{MP-stab} and Proposition \ref{prop:switch} 
\end{proof}	

\begin{remark}{\rm
The condition \ref{cond-nondeg} is much weaker than uniform ellipticity condition that is sometimes imposed on $GG^T$ for these kinds of models - the latter requiring for some $\vep_0>0$, $u^TG(x)G(x)^Tu \geq  \vep_0 u^Tu$, for all $u, x \in \R^d.$

The above theorem also holds, with some possible minor modifications, for systems of the form \eqref{M-syst} taking values in other  locally compact spaces with $\xi_n$ admitting a density $\rho$ with respect to the Haar measure. In particular, for such systems taking values in a countable state space like $\Z^d$ or $\rat^d$, notice that the transition probability mass function (density with respect to counting measure) $q(x,y)$ naturally exists and $q(x,y) \leq 1$, that is, the bound on $q$ in condition \ref{MP-stab-iv} of Theorem \ref{MP-stab} is trivially satisfied. Hence 
condition \ref{cond-nondeg} in Corollary \ref{cor-erg-mark} is not needed in this case. However, depending on the specific model, one might still require $G$ to have full row rank for establishing irreducibility of the chain.

}
\end{remark}	

As an important application, the above corollary can be used to establish ergodicity of numerical schemes of stochastic differential equations (SDEs).
\begin{example} {Euler-Maruyama scheme for ergodic SDEs:}
{\rm
Consider the SDE
\begin{align*}
X(t) = X(0) + \int_0^t F(X(s)) ds + \int_0^t G(X(s)) dW(s),
\end{align*}
and suppose that $X$ is ergodic with invariant / equilibrium distribution $\pi$ - which is typically unknown. Approximating this equilibrium distribution  is an important computational problem in various areas including statistical physics, machine learning, mathematical finance etc. Since numerically solving the corresponding (stationary) Kolomogorov PDE for $\pi$  is computationally expensive even when the dimension is as low as $3$,  one commonly resorts to discretization schemes like the Euler-Maruyama method:
\begin{align*}
X^\Delta(t_{n+1}) = X^\Delta(t_{n})  + F(X^\Delta(t_{n}) ) \Delta +   \Delta^{1/2} G(X^\Delta(t_{n}) ) \xi_{n+1}.
\end{align*}
Here the $\xi_n$ are iid $N(0,I)$-random variables, and $\{t_n\}$ is a partition of $[0,\infty)$ with $t_{n+1}-t_n = \Delta$ - the step size of discretization. However, the use of such discretization techniques in approximating $\pi$ is justified provided one can establish (a)  ergodicity of the discretized chain $\{X^\Delta(t_n)\}$ with a unique invariant distribution $\pi^\Delta$, and (b) convergence of $\pi^\Delta$ to $\pi$ as $\Delta \rt 0$. This is a hard problem involving infinite time horizon, and usual error analysis of Euler-Maruyama schemes, which has of course been well studied in the literature, is not useful here, as they are over finite time intervals. In comparison, much less is available on theoretical error analyses of these types of infinite-time horizon approximation problems, and some important results in this direction have been obtained by Talay \cite{Tatu90, Tal90, GrTa13}. A recent paper \cite{GS18} (also see the references therein for more background on the problem) conducts a thorough large deviation error analysis of the problem in an appropriate scaling regime. 

This short example do not attempt to address both the points (a) and (b) of this problem as that requires a separate paper-long treatment. Here, we are only interested in the point (a) above - which is ergodicity of the  discretized chain $\{X^\Delta(t_n)\}$. It is well known that ergodicity of $X$ does not guarantee the ergodicity of the discretized chain $X^\Delta$. Discretization can destroy the underlying Lyapunov structure of an ergodic SDE!

In \cite{Tatu90, Tal90} among several other important results, Talay et al. in particular showed that the chain $\{X^\Delta(t_n)\}$ is ergodic with unique invariant measure $\pi^\Delta$ and $\EE(f(X^\Delta(t_n)) \rt \pi^\Delta(f)$ as $n\rt \infty$ for any $f \in C^\infty(\R^d,\R)$ such that $f$ and all its derivatives have polynomial growth  under the assumption (i) $\<F(x), x\> \leq -\mfk{m}_0 \|x\|^2$, for $\|x\| >B$, (ii)  $F$ and $G$ are $C^\infty$ with bounded derivatives of all order and (iii) $GG^T$ is uniformly elliptic and bounded. An application of  Corollary \ref{cor-erg-mark} shows that this result can be significantly improved with stronger convergence results under weaker hypothesis (c.f \ref{M-LFG} -\ref{cond-nondeg}). In particular, uniform ellipticity and boundedness conditions on $GG^T$, which are quite restrictive for many models, can be removed.
}
\end{example}

\subsection{Moment stability of linear stochastic control systems}
Consider the system 
\begin{align}\label{eq:cntrl}
X_{n+1} = AX_{n} + Bu_n + \xi_{n+1}
\end{align}
 We are interested in the problem of finding conditions under which a linear stochastic system with possibly unbounded additive stochastic noise is globally stabilizable with bounded control inputs $\{u_n\}$.
Stabilization of stochastic linear systems with bounded control is a topic of significant interest in control engineering because of its importance in diverse fields; suboptimal control strategies such as receding-horizon control, and rollout algorithms, among others, can be easily constructed incorporating  such constraints, and have become popular in applications. Here we simply refer to \cite{ref:RamChaMilHokLyg-10} and references therein for a detailed background on this topic.

  Of course, boundedness of some moments of the noise component is necessary for attaining (moment) stability of the system. Specifically, we consider the following problem:

\np
{\em Problem:} 
Suppose $\mathbb{U} \doteq \lf\{z\in\R^m : \|z\| \leq U_{\max}\ri\}$. We consider admissible possible $k$-history dependent control policies of the type $\pi = \{\pi_n\}$ so that $\pi_n:\R^{d\times k} \rt \mathbb{U}$, and for every $ y_1, y_2 \hdots, y_k \in \R^{d}$, 
$ \pi_n(y_1, \hdots, y_k) \in \mathbb{U}$. Given $r \geq 1$ and $U_{\max} > 0$, find an admissible policy $\pi=\{\pi_n\}_{n \in\N}$ with control authority $U_{\max}$, such that the system \eqref{eq:cntrl} with 
$u_n = \pi_n(X_{n-k+1}, \hdots,X_{n-1}, X_n)$ is $r$-th moment stable, that is, for every initial condition $X_0=x_0$, $\sup_{n} \EE_{x_0}\|X_n\|^r < \infty$.

It is known that mean square boundedness holds for systems with bounded controls where  $A$ is Schur stable, that is, all eigenvalues of $A$ are contained in the open unit disk (the proof uses Foster-Lyapunov techniques from \cite{ref:MeyTwe-09}). In the more general framework, under the assumption that the pair $(A, B)$ is only stabilizable (which in particular allows the eigenvalues of $A$ to lie on the closed unit disk), \cite{ref:RamChaMilHokLyg-10} shows that there exist  a $k$-history dependent control policy that ensures moment stability of \eqref{eq:cntrl}, provided the control authority $U_{\max}$ is chosen sufficiently large. It was conjectured in \cite{ref:RamChaMilHokLyg-10}, that the lower bound on $U_{\max}$ can possibly be lifted with newer techniques, and here we demonstrate that is indeed the case. The following result is an easy corollary of Proposition \ref{prop:switch}. For simplicity, we assume that $A$ is orthogonal and $(A, B)$ is reachable in $k$-steps. The steps from there to the more general case are similar to that in \cite{ref:RamChaMilHokLyg-10}.  In case $B$ has full row rank, it will follow that $k$ can be taken to be $1$, that is, the resulting policy is stationary feedback. 

\begin{proposition}\label{prop:control}
Consider the system defined by \eqref{eq:cntrl}. Suppose that $A$ is orthogonal and the pair $(A, B)$ is reachable in $k$ steps  (that is, $\rank(\SC{R}_k) = d$, where $\SC{R}_k = [B \ AB\ A^2B\ \hdots \ A^{k-1}B]$). Then for any $U_{\max} >0$, there exists a $k$-history dependent policy $\pi = \{\pi_n\}$ such that  given $(X_{n-k+1}, \hdots,X_{n-1}, X_n) = (x_{n-k+1}, \hdots,x_{n-1}, x_n)$, 
$\pi_n(x_{n-k+1}, \hdots,x_{n-1}, x_n)  \doteq f_{n \md k}(x_{\floor{n/k}k})$
for some functions $f_0,f_1,\hdots, f_{k-1}:\R^d \rt \R^m$ where $\|f_i(x)\| \leq U_{\max}$ for $i=0,1,2,\hdots,k-1$, and for which $\sup_{n} \EE_{x_0}\|X_n\|^r < \infty$ for any $x_0 \in \R^d$.
\end{proposition}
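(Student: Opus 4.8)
The result is obtained by applying Proposition~\ref{prop:switch} (with a trivial one--point switching space, so that there is effectively no switching variable) to the chain $\{W_m\}$ obtained by sampling $\{X_n\}$ every $k$ steps: put $W_m\doteq X_{mk}$ and $\SC G_m\doteq\sigalg_{mk}$. Iterating \eqref{eq:cntrl} over a block of length $k$ gives
\[
W_{m+1}=A^kW_m+\sum_{i=0}^{k-1}A^{k-1-i}B\,u_{mk+i}+\Lambda_{m+1},\qquad \Lambda_{m+1}\doteq\sum_{i=0}^{k-1}A^{k-1-i}\xi_{mk+i+1}.
\]
Reachability of $(A,B)$ in $k$ steps means $\SC R_k=[B\ AB\ \cdots\ A^{k-1}B]$ has full row rank $d$; let $\SC R_k^{+}=\SC R_k^{\mathsf T}(\SC R_k\SC R_k^{\mathsf T})^{-1}$ be the corresponding right inverse, so $\SC R_k\SC R_k^{+}=I_d$ and $\|\SC R_k^{+}\|_{\mathrm{op}}<\infty$. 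With the policy described below --- which sets $u_{mk+i}=f_i(W_m)$ for $0\le i<k$, hence at time $n=mk+j$ uses only $X_{mk}=X_{n-j}\in\{X_{n-k+1},\dots,X_n\}$ and is genuinely $k$--history dependent of the required form --- the sampled chain satisfies $W_{m+1}=A^kW_m+\SC R_k\mathbf f(W_m)+\Lambda_{m+1}$ with $\mathbf f(w)\doteq\bigl(f_{k-1}(w)^{\mathsf T},\dots,f_0(w)^{\mathsf T}\bigr)^{\mathsf T}\in\R^{mk}$, which is a (non--switching) instance of \eqref{eq:ss-2}.

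Next I would \emph{design} the policy to force a negative drift for $\|W_m\|$. Fix $\psi_0\doteq U_{\max}/\|\SC R_k^{+}\|_{\mathrm{op}}>0$ and set
\[
\mathbf f(w)\doteq-\psi_0\,\SC R_k^{+}\,\frac{A^kw}{\max(\|w\|,1)},
\]
reading off the continuous, bounded functions $f_0,\dots,f_{k-1}:\R^d\to\R^m$ as the $m$--dimensional blocks of $\mathbf f$ (in the appropriate order). Then $\|f_i(w)\|\le\|\mathbf f(w)\|\le\psi_0\|\SC R_k^{+}\|_{\mathrm{op}}=U_{\max}$ for all $i$ and $w$, and since $A$ (hence $A^k$) is orthogonal, $\|A^kw\|=\|w\|$, so for $\|w\|\ge1$ we get $\SC R_k\mathbf f(w)=-\psi_0A^kw/\|w\|$ and thus $\<\SC R_k\mathbf f(w),A^kw\>=-\psi_0\|w\|$ and $\|A^kw+\SC R_k\mathbf f(w)\|^2=\|w\|^2-2\psi_0\|w\|+\psi_0^2$. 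Writing $L(w)=A^kw$, $F(w)=\SC R_k\mathbf f(w)$ and letting the state--independent noise play the role of the multiplicative term, one checks the hypotheses of Proposition~\ref{prop:switch}: $\bar L\equiv\bar F\equiv 0$ and $\|L(w)\|=\|w\|$, so the growth hypothesis \ref{H-SS-FG} holds with $\bar{\mfk m}_{L,1,2}=1$; $\|F(w)\|\equiv\psi_0$, so $F$ has growth exponent $0$; the negative--drift hypothesis \ref{H-SS-drift} holds with $\mfk m_0=\psi_0$ and $\g=0$; and --- under the standing hypothesis that the $\xi_n$ are centered (which is necessary, since no bounded control can offset a persistent mean drift) --- the increment $\Lambda_{m+1}$ is independent of $\SC G_m$, centered, with $\sup_m\EE\|\Lambda_{m+1}\|^p\le k^p\sup_n\EE\|\xi_n\|^p<\infty$, so the multiplicative term is centered with growth exponent $g_0=0$. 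Hence $\delta_0=2(f_0\vee g_0)=0<1=1+\g$, the exponent condition \ref{H-SS-exp} holds (case (a), with the relaxation $g_0<1/2$ available for centered noise), and Proposition~\ref{prop:switch} yields $\sup_m\EE_{x_0}\|X_{mk}\|^r<\infty$ for every $x_0\in\R^d$.

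It remains to bound $\EE_{x_0}\|X_n\|^r$ for \emph{all} $n$. For $n=mk+j$ with $0\le j<k$, iterating \eqref{eq:cntrl} over the partial block and using $\|A^i\|=1$ and $\|u_{mk+i}\|\le U_{\max}$ gives $\|X_{mk+j}\|\le\|X_{mk}\|+k\|B\|U_{\max}+\sum_{i=0}^{k-1}\|\xi_{mk+i+1}\|$, so
\[
\EE_{x_0}\|X_n\|^r\le 3^{r-1}\Bigl(\EE_{x_0}\|X_{mk}\|^r+(k\|B\|U_{\max})^r+k^{r-1}\textstyle\sum_{i=0}^{k-1}\EE\|\xi_{mk+i+1}\|^r\Bigr),
\]
which is bounded uniformly in $n$ by the previous paragraph and the moment assumption on $\{\xi_n\}$; hence $\sup_n\EE_{x_0}\|X_n\|^r<\infty$. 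When $B$ has full row rank, already $\SC R_1=B$ has rank $d$, so one may take $k=1$ and the resulting policy $u_n=f_0(X_n)$ is stationary feedback.

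The only genuinely delicate point --- and the reason the artificial lower bound on $U_{\max}$ in \cite{ref:RamChaMilHokLyg-10} can be dispensed with --- is the choice of ``jump metric.'' The uncentered increment $W_{m+1}-W_m$ of the sampled chain contains the term $A^kW_m-W_m$, whose norm is of order $\|W_m\|$ and is \emph{not} bounded, so a Pemantle--Rosenthal--type bound $\EE\|W_{m+1}-W_m\|^p\le\mathrm{const}$ cannot hold unless the control is large enough for $\SC R_k\mathbf f(W_m)$ to dominate this term --- precisely the origin of the lower bound on $U_{\max}$. Theorem~\ref{t:maintheorem} instead constrains only $\jdiff_m$, the centered conditional $p$-th moment of $\|W_{m+1}\|$ given $\SC G_m$ (cf.\ \ref{t:maintheorem:jumpbnd}); and since $W_{m+1}-\EE(W_{m+1}\mid\SC G_m)=\Lambda_{m+1}-\EE\Lambda_{m+1}$, the quantity $\|W_{m+1}\|-\EE(\|W_{m+1}\|\mid\SC G_m)$ is controlled by $\|\Lambda_{m+1}-\EE\Lambda_{m+1}\|$ (as in \eqref{SS-jump}), which is state--independent and bounded in every $L^p$. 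Thus \ref{t:maintheorem:jumpbnd} holds with $s=0$ and $\jumpbnd$ constant, no matter how small $U_{\max}$ is. Solving $\SC R_k\mathbf f=-\psi_0A^kw/\|w\|$ via the pseudoinverse, verifying the $U_{\max}$ constraint on $\mathbf f$, deriving the negative drift through the $\sqrt{\cdot}$ estimate used in the proof of Proposition~\ref{prop:switch}, and the per--block interpolation above are then routine.
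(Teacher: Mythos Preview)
Your proof is correct and follows essentially the same strategy as the paper: sample the chain at multiples of $k$, use the right inverse of $\SC R_k$ to build a bounded control that makes $\SC R_k\mathbf f(w)$ a saturated multiple of $-A^kw/\|w\|$, invoke Proposition~\ref{prop:switch} with $L(w)=A^kw$, $\g=0$, $f_0=g_0=0$ (using the centered-noise relaxation $g_0<1/2$), and then interpolate within each block. The only cosmetic difference is that the paper writes the control via a saturation map $\sat(\cdot)$ on the state (so $F(w)=-A^k\sat(w)$), whereas you normalize by $\max(\|w\|,1)$; these produce the same inner product $\langle F(w),L(w)\rangle=-\psi_0\|w\|$ outside a ball and the same verification of Assumption~\ref{assum-SS2}. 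Your closing paragraph on why the centered jump metric removes the lower bound on $U_{\max}$ is a helpful gloss that the paper leaves implicit in its proof.
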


\begin{proof}
Define $\hat X^{(k)}_n = X_{nk}$, and notice that by iterating \eqref{eq:cntrl} we get
\begin{align*}
\hat X^{(k)}_{n+1} = & A^k\hat X^{(k)}_n + \SC{R}_k\begin{pmatrix} u_{(n+1)k-1}\\ \vdots\\ u_{nk+1}\\ u_{nk}\end{pmatrix}+\sum_{j=1}^{k}A^{k-1-j}\xi_{nk+i}\equiv A^k \hat X^{(k)}_n + \SC{R}_k \hat u^{(k)}_{n}+\hat \xi^{(k)}_n
\end{align*}
Notice that $\EE( \xi^{(k)}_n) = 0$ and $\sup_n \EE\| \xi^{(k)}_n\|^p \leq \hat \const_k $ for some constant $ \hat \const_k >0$. 
Since $ \SC{R}_k$ has full row rank, it has a right inverse $ \SC{R}_k^{-}$. Define
$$\sat(y) = \begin{cases}
y, & \quad  y \in B(0,\hat U_{\max})\\
\hat U_{\max} \ y/\|y\|,& \quad \text{otherwise}
\end{cases}
$$
 and choose $\hat u^{(k)}_{n} = -\SC{R}_k^{-}A^k\sat( \hat X^{(k)}_n)$,   where 
 $\hat U_{\max}$ is such that $\|\SC{R}_k^{-}A^k\|\hat U_{\max} \leq U_{\max}.$
 This yields the system
%
 \begin{align*}
\hat X^{(k)}_{n+1} =A^k \hat X^{(k)}_n - A^k\sat( \hat X^{(k)}_n)+\hat \xi^{(k)}_n.
\end{align*}
Since for $\|z\| > U_{\max}$,  $\<A^kz, -A^k\sat(z) \> =  -\|z\|$ (recall that $A$ is orthogonal), we have from Proposition \ref{prop:switch} that there exists a constant $ \mfk{c}^{(k,r)}_0$ such that 
$$\sup_{n} \EE\|\hat X^{(k)}_{n}\|^r = \sup_{n} \EE\| X_{nk}\|^r <  \mfk{c}^{(k)}_0.$$
 It is now immediate  by a sequential argument that for any $\ell=0,1,\hdots, k-1$, 
$
\EE\| X_{nk+\ell}\|^r \leq  \mfk{c}^{(k,r)}_\ell
$
where $ \mfk{c}^{(k,r)}_\ell = 3^{r-1}\lf(\|A\|^r \mfk{c}^{(k,r)}_{\ell-1} + \|\SC{R}_k^{-}A^k\|^rU^r_{\max} +\mfk{m}^r_*\ri)$.

 Notice that the original controls $u_n$ are defined 
 $$u_{n} = - E_{k-(n\md k)}^T \SC{R}_k^{-}A^k\sat ( X_{\floor{n/k}k}),$$
 where the matrices $E_j \in \M_{m \times km}, \ j=1,2, \hdots, k$, are defined by 
 \begin{align*}
 E_j = \begin{bmatrix}
\bm{0}_{m\times m}  & \hdots & \bm{0}_{m\times m} & \underbrace{ \bm{I}_{m\times m}}_{j\text{-th block}} & \bm{0}_{m\times m}& \hdots &\bm{0}_{m\times m}
 \end{bmatrix}
  \end{align*}
In particular, from the state at time $nk$,  the present and the next $k-1$ controls $u_j, j=nk, nk+1, \hdots, nk+k-1$ can be computed.
\end{proof}

\bigskip
\bigskip

\bibliographystyle{plainnat}
\bibliography{mmt-ref}

\end{document}